\newtheorem{thm}{Theorem}[section]
\newtheorem{lem}{Lemma}[section]
\newtheorem{prop}{Proposition}[section]
\newtheorem{coro}{Corollary}[section]
\newtheorem{cond}{Condition}
\newtheorem{rem}{Remark}[section]{\itshape}{\rmfamily}
{\itshape}{\rmfamily}
\newenvironment{proof}{\noindent{\it Proof.~~}}{\medskip}
\def\eqnarray{\stepcounter{equation}\let\@currentlabel=\theequation
\global\@eqnswtrue
\global\@eqcnt\z@\tabskip\@centering\let\\=\@eqncr
$$\halign to \displaywidth\bgroup\@eqnsel\hskip\@centering
  $\displaystyle\tabskip\z@{##}$&\global\@eqcnt\@ne 
  \hfil$\;{##}\;$\hfil
  &\global\@eqcnt\tw@ $\displaystyle\tabskip\z@{##}$\hfil 
   \tabskip\@centering&\llap{##}\tabskip\z@\cr}
    \renewcommand{\theequation}{%
    \thesection.\arabic{equation}}
\def\Left#1#2\Right{\begingroup%
   \def\ts@r{\nulldelimiterspace=0pt \mathsurround=0pt}%
   \let\@hat=#1%
   \def\sht@im{#2}%
   \def\@t{{\mathchoice{\def\@fen{\displaystyle}\k@fel}%
          {\def\@fen{\textstyle}\k@fel}%
          {\def\@fen{\scriptstyle}\k@fel}%
          {\def\@fen{\scriptscriptstyle}\k@fel}}}%
   \def\g@rin{\ts@r\left\@hat\vphantom{\sht@im}\right.}%
   \def\k@fel{\setbox0=\hbox{$\@fen\g@rin$}\hbox{%
      $\@fen \kern.3875\wd0 \copy0 \kern-.3875\wd0%
      \llap{\copy0}\kern.3875\wd0$}}%
      \def\pt@h{\mathopen\@t}\pt@h\sht@im%
      \Right}%
\def\Right#1{\let\@hat=#1%
   \def\st@m{\mathclose\@t}%
   \st@m\endgroup}
\newcommand{\vc}{\bm}
\DeclareRobustCommand\widecheck[1]{{\mathpalette\@widecheck{#1}}}
\def\@widecheck#1#2{%
    \setbox\z@\hbox{\m@th$#1#2$}%
    \setbox\tw@\hbox{\m@th$#1%
       \widehat{%
          \vrule\@width\z@\@height\ht\z@
          \vrule\@height\z@\@width\wd\z@}$}%
    \dp\tw@-\ht\z@
    \@tempdima\ht\z@ \advance\@tempdima2\ht\tw@ \divide\@tempdima\thr@@
    \setbox\tw@\hbox{%
       \raise\@tempdima\hbox{\scalebox{1}[-1]{\lower\@tempdima\box
\tw@}}}%
    {\ooalign{\box\tw@ \cr \box\z@}}}
\newcommand{\ol}{\overline}
\newcommand{\wt}{\widetilde}
\newcommand{\wh}{\widehat}
\newcommand{\bv}{\breve}
\def\simhm#1{\stackrel{#1}{\sim}}
\newcommand{\vertiii}[1]%
{{\left\vert\kern-0.25ex\left\vert\kern-0.25ex\left\vert #1 
 \right\vert\kern-0.25ex\right\vert\kern-0.25ex\right\vert}}
\newcommand{\down}[2]{\smash{\lower#1\hbox{#2}}}
\newcommand{\up}[2]{\smash{\lower-#1\hbox{#2}}}
\newcommand{\dm}{\displaystyle}
\newcommand{\qed}{\hspace*{\fill}$\Box$}
\def\presub#1{\hspace{0.05em}{}_{#1}\hspace{-0.05em}}
\newcommand{\vmax}{\vee}
\newcommand{\hms}{\hspace{0.1em}}
\newcommand{\PP}{\mathsf{P}}
\newcommand{\bbB}{\mathbb{B}}
\newcommand{\bbC}{\mathbb{C}}
\newcommand{\bbL}{\mathbb{L}}
\newcommand{\bbM}{\mathbb{M}}
\newcommand{\bbN}{\mathbb{N}}
\newcommand{\bbR}{\mathbb{R}}
\newcommand{\bbS}{\mathbb{S}}
\newcommand{\bbZ}{\mathbb{Z}}
\DeclareMathOperator*{\argmin}{arg\,min}
\newcommand{\rd}{{\rm d}}
\newcommand{\re}{{\rm e}}
\renewcommand{\labelenumi}{(\roman{enumi})}
\newcommand{\dd}[1]{\if#11 1\!\!1 
\else {\if#1C I\!\!\!C
\else {\if#1G I\!\!\!G 
\else {\if#1J J\!\!\!J 
\else {\if#1S S\!\!\!S
\else {\if#1Z Z\!\!\!Z
\else {\if#1Q O\!\!\!\!Q
\else I\!\!#1
\fi}
\fi}
\fi}
\fi} 
\fi} 
\fi} 
\fi} 
\begin{document}\thispagestyle{empty} 

\hfill

\vspace{-10mm}

{\Large{\bf
\begin{center}
A sequential update algorithm for computing\\
the stationary distribution vector in upper block-Hessenberg Markov chains\footnote[1]{%
Published online in {\it Queueing Systems} on February 21, 2019 (doi: 10.1007/s11134-019-09599-x)
}
\if0
\footnote[1]{
This research was supported in part by JSPS KAKENHI Grant Numbers JP18K11181.
}
\fi
%
%
\end{center}
}
}

\begin{center}
{
Hiroyuki Masuyama%
\footnote[2]{E-mail: masuyama@sys.i.kyoto-u.ac.jp}
}

\medskip

{\small
Department of Systems
Science, Graduate School of Informatics, Kyoto University\\
Kyoto 606-8501, Japan
}

\bigskip
\medskip

{\small
\textbf{Abstract}

\medskip

\begin{tabular}{p{0.85\textwidth}}
This paper proposes a new algorithm for computing the stationary distribution vector in continuous-time upper block-Hessenberg Markov chains. To this end, we consider the last-block-column-linearly-augmented (LBCL-augmented) truncation of the (infinitesimal) generator of the upper block-Hessenberg Markov chain. The LBCL-augmented truncation is a linearly-augmented truncation such that the augmentation distribution has its probability mass only on the last block column. We first derive an upper bound for the total variation distance between the respective stationary distribution vectors of the original generator and its LBCL-augmented truncation. Based on the upper bound, we then establish a series of linear fractional programming (LFP) problems to obtain augmentation distribution vectors such that the bound converges to zero. Using the optimal solutions of the LFP problems, we construct a matrix-infinite-product (MIP) form of the original (i.e., not approximate) stationary distribution vector and develop a sequential update algorithm for computing the MIP form. Finally, we demonstrate the applicability of our algorithm to BMAP/M/$\infty$ queues and M/M/$s$ retrial queues.
\end{tabular}
}
\end{center}

\begin{center}
\begin{tabular}{p{0.90\textwidth}}
{\small
{\bf Keywords:} %
Upper block-Hessenberg Markov chain;
level-dependent M/G/1-type Markov chain;
Matrix-infinite-product (MIP) form;
Last-block-column-linearly-augmented truncation (LBCL-augmented truncation);
BMAP/M/$\infty$ queue;
M/M/$s$ retrial queue
%
%

\medskip

{\bf Mathematics Subject Classification:} %
60J22; 60K25
}
\end{tabular}

\end{center}

\section{Introduction}\label{sec-intro}

This paper considers an upper block-Hessenberg Markov chain in continuous time. To describe such a Markov chain, we first introduce some symbols. Let $\bbR_+$ denote the set of all nonnegative real numbers, i.e., $\bbR_+=[0,\infty)$. Let $\bbN = \{1,2,3,\dots\}$, $\bbZ_+ = \{0,1,2,\dots\}$, and $\bbZ_n = \{0,1,\dots,n\}$ for $n \in \bbZ_+$.
We then introduce some sets of pairs of integers:
\begin{eqnarray*}
\bbS &=& \bigcup_{k=0}^{\infty} \bbL_k, \quad
\bbS_n = \bigcup_{k=0}^n \bbL_k, \quad
\ol{\bbS}_n = \bbS \setminus \bbS_n,
\quad 
n \in \bbZ_+,
\\
\bbL_k 
&=& \{k\} \times \bbM_k, \quad k \in \bbZ_+,
\end{eqnarray*}
where $\bbM_k = \{1,2,\dots,M_k\} \subset \bbN$.
We also define $(k,i;\ell,j)$ as
an ordered pair $((k,i),(\ell,j))$ in $\bbS^2$. Furthermore, we define $\vc{e}=(1,1,\dots)^{\top}$, which has an appropriate (finite or infinite) number of ones.

Let $\{(X(t),J(t));t \in \bbR_+\}$ denote a regular-jump bivariate Markov chain with state space $\bbS$ (see \cite[Chapter 8, Definition 2.5]{Brem99} for the definition of regular-jump Markov chains). Let $\vc{Q}:=(q(k,i;\ell,j))_{(k,i;\ell,j) \in\bbS^2}$ denote the (infinitesimal) generator of the Markov chain $\{(X(t),J(t))\}$, which is in an upper block-Hessenberg form:
\begin{equation}
\vc{Q} = 
\bordermatrix{
        & \bbL_0 & \bbL_1    & \bbL_2    & \bbL_3   & \cdots
\cr
\bbL_0 		& 
\vc{Q}_{0,0}  	& 
\vc{Q}_{0,1} 	& 
\vc{Q}_{0,2}  	& 
\vc{Q}_{0,3} 	&  
\cdots
\cr
\bbL_1 		&
\vc{Q}_{1,0}  	& 
\vc{Q}_{1,1} 	& 
\vc{Q}_{1,2}  	& 
\vc{Q}_{1,3} 	&  
\cdots
\cr
\bbL_2 		& 
\vc{O}  		& 
\vc{Q}_{2,1} 	& 
\vc{Q}_{2,2}  	& 
\vc{Q}_{2,3} 	&  
\cdots
\cr
\bbL_3 & 
\vc{O}  		& 
\vc{O}  		& 
\vc{Q}_{3,2}  	& 
\vc{Q}_{3,3} 	&  
\cdots
\cr
~\vdots  	& 
\vdots     		& 
\vdots     		&  
\vdots    		& 
\vdots    		& 
\ddots
}.
\label{defn-Q-MG1-type}
\end{equation}
We refer to $\{(X(t),J(t))\}$ as the {\it upper block-Hessenberg Markov chain} (which may be called the {\it level-dependent M/G/1-type Markov chain}) and refer to $X(t)$ and $J(t)$ as the {\it level variable} and the {\it phase variable}, respectively. Note that if $(X(t),J(t)) \in \bbL_k$ then $X(t) = k$ and thus $\bbL_k$ is called {\it level} $k$.

Throughout the paper, unless otherwise stated, we assume that $\{(X(t),J(t))\}$ is ergodic (i.e., irreducible, aperiodic and positive recurrent). We then define $\vc{\pi}:=(\pi(k,i))_{(k,i)\in\bbS} > \vc{0}$ as the unique stationary
distribution vector of the ergodic generator $\vc{Q}$ (see, e.g., \cite[Chapter 5, Theorems 4.4 and 4.5]{Ande91}). By definition, $\vc{\pi}\vc{Q} = \vc{0}$ and $\vc{\pi}\vc{e}=1$.
For later use, we also define $\vc{\pi}_k =(\pi(k,i))_{i\in\bbM_k}$ for $k \in \bbZ_+$ and partition $\vc{\pi}$ as
\begin{eqnarray*}
\vc{\pi}
&=& 
\bordermatrix{
        & 
\bbL_0 & 
\bbL_1 & 
\cdots
\cr
			& 
\vc{\pi}_0  & 
\vc{\pi}_1 	& 
\cdots
}.
\end{eqnarray*}

It is, in general, difficult to obtain an explicit expression of $\vc{\pi}=(\vc{\pi}_0,\vc{\pi}_1,\dots)$. 
Thus, we study the computation of the stationary
distribution vector $\vc{\pi}$ through a {\it linearly augmented truncation} of the ergodic generator $\vc{Q}$.
The linearly augmented truncation is described below.

Let $\presub{(n)}\vc{Q}:=(\presub{(n)}q(k,i;\ell,j))_{(k,i;\ell,j)\in (\bbS_n)^2}$, $n \in \bbZ_+$, denote the northwest corner truncation of the ergodic generator $\vc{Q}$, which is given by
\begin{eqnarray*}
\presub{(n)}\vc{Q}
=
\left(
\begin{array}{ccccccc}
\vc{Q}_{0,0} 	&
\vc{Q}_{0,1} 	&
\vc{Q}_{0,2} 	&
\cdots		&
\vc{Q}_{0,n-2} 	&
\vc{Q}_{0,n-1} 	&
\vc{Q}_{0,n} 	
\\
\vc{Q}_{1,0} 	&
\vc{Q}_{1,1} 	&
\vc{Q}_{1,2} 	&
\cdots		&
\vc{Q}_{1,n-2} 	&
\vc{Q}_{1,n-1} 	&
\vc{Q}_{1,n} 	
\\
\vc{O} 		&
\vc{Q}_{2,1} 	&
\vc{Q}_{2,2} 	&
\cdots		&
\vc{Q}_{2,n-2} 	&
\vc{Q}_{2,n-1} 	&
\vc{Q}_{2,n} 	
\\
\vdots		&
\vdots		&
\vdots		&
\ddots		&
\vdots		&
\vdots		&
\vdots		
\\
\vc{O}		&
\vc{O}		&
\vc{O}		&
\cdots		&
\vc{Q}_{n-1,n-2} 	&
\vc{Q}_{n-1,n-1} 	&
\vc{Q}_{n-1,n} 
\\
\vc{O}		&
\vc{O}		&
\vc{O}		&
\cdots		&
\vc{O}		&
\vc{Q}_{n,n-1} 	&
\vc{Q}_{n,n} 
\\
\end{array}
\right).
\end{eqnarray*}
We then define $\presub{(n)}\ol{\vc{Q}}:=(\presub{(n)}\ol{q}(k,i;\ell,j))_{(k,i;\ell,j)\in(\bbS_n)^2}$, $n\in\bbZ_+$, as a $Q$-matrix (diagonally dominant matrix with nonpositive diagonal elements and nonnegative off-diagonal ones; see, e.g., \cite[Section~2.1]{Ande91}) such that
\begin{equation}
\presub{(n)}\ol{\vc{Q}}
= \presub{(n)}\vc{Q} 
- \presub{(n)}\vc{Q}\vc{e} \presub{(n)}\vc{\alpha},
\qquad n\in\bbZ_+,
\label{defn-(n)ol{Q}} 
\end{equation}
where $\presub{(n)}\vc{\alpha}:=(\presub{(n)}\alpha(k,i))_{(k,i) \in \bbS_n}$ is a probability vector. We refer to $\presub{(n)}\ol{\vc{Q}}$ as the {\it linearly augmented truncation} of $\vc{Q}$. We also refer to $\presub{(n)}\vc{\alpha}$ as the {\it augmentation distribution vector}.

Let $\presub{(n)}\ol{\vc{\pi}}=(\presub{(n)}\ol{\pi}(k,i))_{(k,i) \in \bbS_n}$, $n\in\bbZ_+$, denote
\begin{equation}
\presub{(n)}\ol{\vc{\pi}} 
= {\presub{(n)}\vc{\alpha} (- \presub{(n)}\vc{Q})^{-1}
\over \presub{(n)}\vc{\alpha} (- \presub{(n)}\vc{Q})^{-1}\vc{e}},
\qquad n \in \bbZ_+,
\label{defn-(n)ol{pi}}
\end{equation}
where $(- \presub{(n)}\vc{Q})^{-1}$ exists due to the ergodicity of $\vc{Q}$. From (\ref{defn-(n)ol{Q}}) and (\ref{defn-(n)ol{pi}}), we have
\[
\presub{(n)}\ol{\vc{\pi}}\presub{(n)}\ol{\vc{Q}}=\vc{0},
\quad 
\presub{(n)}\ol{\vc{\pi}} \ge \vc{0},\quad \presub{(n)}\ol{\vc{\pi}}\vc{e}=1;
\]
that is,
$\presub{(n)}\ol{\vc{\pi}}$ is a stationary distribution vector of the linearly augmented truncation $\presub{(n)}\ol{\vc{Q}}$. Furthermore, as $n\to\infty$, each element of $\presub{(n)}\ol{\vc{Q}}$ converges to the corresponding one of $\vc{Q}$. Thus, we can expect $\presub{(n)}\ol{\vc{\pi}}$ to be an approximation to $\vc{\pi}$. 
This is why we refer to $\presub{(n)}\ol{\vc{\pi}}$ as the {\it linearly augmented truncation approximation} to $\vc{\pi}$. 

We note that if the augmentation distribution vector $\presub{(n)}\vc{\alpha}$ has its probability mass only on the last block (i.e., $\bbL_n$) then the linearly augmented truncation $\presub{(n)}\ol{\vc{Q}}$ inherits upper block-Hessenberg structure from the original generator $\vc{Q}$. 
To utilize this tractable structure, we focus on a special linearly augmented truncation $\presub{(n)}\ol{\vc{Q}}$ with $\presub{(n)}\vc{\alpha} = \presub{(n)}\wh{\vc{\alpha}}$, where $\presub{(n)}\wh{\vc{\alpha}}$ is a probability vector such that
\begin{equation}
\presub{(n)}\wh{\vc{\alpha}}
=
\bordermatrix{
        	& 
\bbS_{n-1} &
\bbL_n 
\cr
			& 
\vc{0}  	& 
\vc{\alpha}_n	
}.
\label{cond-(n_s)alpha}
\end{equation}
For convenience, we refer to such a linearly augmented truncation as a {\it last-block-column-linearly-augmented truncation (LBCL-augmented truncation)}. 

We now define $\presub{(n)}\wh{\vc{Q}}:=(\presub{(n)}\wh{q}(k,i;\ell,j)_{(k,i;\ell,j) \in (\bbS_n)^2}$, $n\in\bbZ_+$, as the LBCL-augmented truncation of $\vc{Q}$, that is, a $Q$-matrix such that
\begin{equation}
\presub{(n)}\wh{\vc{Q}}
= \presub{(n)}\vc{Q} 
- \presub{(n)}\vc{Q}\vc{e} \presub{(n)}\wh{\vc{\alpha}},
\qquad n\in\bbZ_+.
\label{defn-(n)wh{Q}}
\end{equation}
We also define $\presub{(n)}\wh{\vc{\pi}}:=(\presub{(n)}\wh{\pi}(k,i))_{(k,i) \in \bbS_n}$, $n\in\bbZ_+$, as
\begin{equation}
\presub{(n)}\wh{\vc{\pi}} 
= {\presub{(n)}\wh{\vc{\alpha}} (- \presub{(n)}\vc{Q})^{-1}
\over \presub{(n)}\wh{\vc{\alpha}} (- \presub{(n)}\vc{Q})^{-1}\vc{e}},
\qquad n \in \bbZ_+.
\label{defn-(n)ol{pi}_n}
\end{equation}
Note that $\presub{(n)}\wh{\vc{\pi}}$ is equal to $\presub{(n)}\ol{\vc{\pi}}$ in (\ref{defn-(n)ol{pi}}) with $\presub{(n)}\vc{\alpha} = \presub{(n)}\wh{\vc{\alpha}}$; that is, $\presub{(n)}\wh{\vc{\pi}}$ is a stationary distribution vector of the LBCL-augmented truncation $\presub{(n)}\wh{\vc{Q}}$. Hence, we call $\presub{(n)}\wh{\vc{\pi}}$ the {\it last-block-column-linearly-augmented truncation approximation (LBCL-augmented truncation approximation)} to $\vc{\pi}$.

In this paper, we propose a new algorithm for computing the original stationary distribution vector $\vc{\pi}$ by using the LBCL-augmented truncation approximation $\presub{(n)}\wh{\vc{\pi}}$. In fact, $\presub{(n)}\wh{\vc{\pi}}$ does not necessarily converge to $\vc{\pi}$ as $n \to \infty$ (see Section~\ref{subsec-example}). We solve such a problem by choosing $\presub{(n)}\wh{\vc{\alpha}}$ adaptively for each $n \in \bbZ_+$. To achieve this, we first derive an upper bound for the total variation distance between $\vc{\pi}$ and $\presub{(n)}\widehat{\vc{\pi}}$. With this upper bound, we establish a series of linear fractional programming (LFP) problems for finding $\{\presub{(n)}\wh{\vc{\alpha}};n\in\bbZ_+\}$ such that $\{\presub{(n)}\wh{\vc{\pi}};n\in\bbZ_+\}$ converges to $\vc{\pi}$. Fortunately, the optimal solutions of the LFP problems are explicitly obtained. Thus, we can readily construct a convergent sequence of LBCL-augmented truncation approximations, which yields a matrix-infinite-product (MIP) form of $\vc{\pi}$. We note that the LFP problems are not given in advance but are formulated successively while constructing the MIP form. As a result, we can develop a sequential update algorithm for computing $\vc{\pi}$. 

We now review related work. Some researchers \cite{Baum12-Procedia,Brig95,Phun10-QTNA} have studied the computation of level-dependent quasi-birth-and-death processes (LD-QBDs), which belong to a special case of upper block-Hessenberg Markov chains. These previous studies propose algorithms for computing the {\it conditional stationary distribution vector} $\vc{\pi}^{(N)}$:
\[
\vc{\pi}^{(N)} 
= 
{(\vc{\pi}_0,\vc{\pi}_1,\dots,\vc{\pi}_N) 
\over 
\sum_{\ell=0}^N \vc{\pi}_{\ell}},
\]
where $N \in \bbN$ is the truncation parameter that should be determined so that $\vc{\pi}^{(N)}$ is sufficiently close to $\vc{\pi}$.
Takine~\cite{Taki16} develops an algorithm for computing $\vc{\pi}^{(N)}$ of a special upper block-Hessenberg Markov chain, which assumes that, for all sufficiently
large $n\in\bbZ_+$, the $\vc{Q}_{n,n-1}$ are nonsingular and the $\vc{Q}_{n,n}$ are of the same order (see Assumption 1 therein). These additional assumptions in \cite{Taki16} are removed by Kimura and Takine \cite{M.Kimu18}. Besides, Shin and Pearce~\cite{Shin98}, Li et al.~\cite{LiQuan05}, and Klimenok and Dudin~\cite{Klim06} modify transition rates (or transition probabilities) such that they are eventually level independent, and then these researchers establish algorithms for computing approximately the stationary distribution vectors of upper block-Hessenberg Markov chains.

The algorithms proposed in \cite{Brig95,Shin98} have update procedures to improve their outputs, like our algorithm. However, their update procedures need to recompute, from scratch, most components of their new outputs every time. On the other hand, our algorithm utilizes the components of the current result, together with some additional computation, to generate an updated result. This is a remarkable feature of our algorithm.

The rest of this paper is divided into four sections. Section~\ref{sec-MIP-form-solution} describes preliminary results on the LBCL-augmented truncation approximation for upper block-Hessenberg Markov chains. Section~\ref{sec-algorithm} proposes a sequential update algorithm that generates a sequence of LBCL-augmented truncation approximations converging to the original stationary distribution vector. Section~\ref{sec-discussion} demonstrates the applicability of the proposed algorithm. Finally, Section~\ref{sec-remarks} provides concluding remarks.


\section{The LBCL-augmented truncation approximation}\label{sec-MIP-form-solution}

This section consists of three subsections. 
In Section~\ref{subsec-matrix-product-form}, we show a matrix-product form of the LBCL-augmented truncation approximation $\presub{(n)}\wh{\vc{\pi}}$. In Section~\ref{subsec-error-bound}, we derive an error bound for $\presub{(n)}\wh{\vc{\pi}}$, more specifically, an upper bound for the total variation distance between $\presub{(n)}\wh{\vc{\pi}}$ and $\vc{\pi}$.
In Section~\ref{subsec-example}, we provide an example such that $\presub{(n)}\wh{\vc{\pi}}$ does not converge to $\vc{\pi}$ as $n \to \infty$. 

Before entering the body of this section, we describe our notation.
For any matrix $\vc{M}$ (resp.~vector $\vc{m}$), let $|\vc{M}|$ (resp.~$|\vc{m}|$) denote the matrix (resp.~vector) obtained by taking the absolute value of each element of $\vc{M}$ (resp.~$\vc{m}$). A finite matrix is treated, if necessary, as an infinite matrix that keeps the existing elements in their original positions and has an infinite number of zeros in the other positions. Such treatment is also applied to finite vectors. Thus, for example, it follows from (\ref{defn-(n)wh{Q}}) that
\begin{eqnarray}
\lefteqn{
\presub{(n)}\wh{\vc{\pi}} \,| \presub{(n)}\wh{\vc{Q}} - \vc{Q} |
}
\quad &&
\nonumber
\\
&=& \bordermatrix{
        	& 
\bbS_n 		& 	
\ol{\bbS}_n 	
\cr
			& 
\presub{(n)}\wh{\vc{\pi}} &
\vc{0}  	
}
\nonumber
\\
&& {} \times 
\left|
\bordermatrix{
        	& 
\bbS_n 		& 	
\ol{\bbS}_n 	
\cr
\bbS_n &
\presub{(n)}\vc{Q}
-\presub{(n)}\vc{Q} \vc{e} \presub{(n)}\wh{\vc{\alpha}} &
\vc{O}
\cr
\ol{\bbS}_n &
\vc{O} & \vc{O}
}
-
\bordermatrix{
        & \bbS_n & \ol{\bbS}_n
\cr
\bbS_n 		& 
\presub{(n)}\vc{Q} 	& \presub{(n)}\vc{Q}_{>n}	 
\cr
\ol{\bbS}_n		&
\ast  	& 
\ast	
}
\right|
\nonumber
\\
&=& 
\left(
\begin{array}{c@{~}|@{~}c}
\presub{(n)}\wh{\vc{\pi}} & \vc{0}
\end{array}
\right)
\left(
\begin{array}{c@{~}|@{~}c}
-\presub{(n)}\vc{Q} \vc{e} \presub{(n)}\wh{\vc{\alpha}} &
\presub{(n)}\vc{Q}_{>n}
\rule[-2.5mm]{0mm}{2mm}{}
\\
\hline
* & *
\end{array}
\right),
\label{eqn-(n)wh{pi}((n)wh{Q}-Q)}
\end{eqnarray}
where $\presub{(n)}\vc{Q}_{>n} = (q(k,i;\ell,j))_{(k,i;\ell,j) \times \bbS_n \times \ol{\bbS}_n}$.
It also follows from (\ref{cond-(n_s)alpha}) that, for any column vector $\vc{v}:= (v(k,i))_{(k,i)\in\bbS}$, 
\begin{equation}
\presub{(n)}\wh{\vc{\alpha}}\vc{v}
= \bordermatrix{
        	& 
\bbS_n 	& 	
\ol{\bbS}_n 	
\cr
	& 
\presub{(n)}\wh{\vc{\alpha}} &
\vc{0}  	
}
\vc{v}
= \bordermatrix{
        	& 
\bbS_{n-1} 	& 	
\bbL_n 		&
\ol{\bbS}_n 	
\cr
			& 
\vc{0}  	& 
\vc{\alpha}_n &
\vc{0}  	
}
\vc{v}
= \vc{\alpha}_n\vc{v}_n,
\label{eqn-(n)wh{alpha}v}
\end{equation}
where $\vc{v}_n = (v(k,i))_{(k,i) \in\bbL_n} = (v(n,i))_{i\in\bbM_n}$ for $n \in \bbZ_+$. Furthermore, we use the following notation: If a sequence $\{\vc{Z}_n; n \in \bbZ_+\}$ of finite matrices (or vectors) converges element-wise to an infinite matrix (or vector) $\vc{Z}$, then we denote this convergence by $\lim_{n\to\infty}\vc{Z}_n = \vc{Z}$. We also define the empty sum as zero (e.g., $\sum_{k=1}^0 \cdot = 0$).

\subsection{A matrix-product form}\label{subsec-matrix-product-form}

We partition $\presub{(n)}\wh{\vc{\pi}}$ and $(-\presub{(n)}\vc{Q})^{-1}$ level-wise as follows:
\begin{align}
\presub{(n)}\wh{\vc{\pi}}
&=
\bordermatrix{
        & 
\bbL_0 	& 
\bbL_1 	& 
\cdots	&
\bbL_n 	
\cr
			& 
\presub{(n)}\wh{\vc{\pi}}_{0} & 
\presub{(n)}\wh{\vc{\pi}}_{1}	& 
\cdots							&
\presub{(n)}\wh{\vc{\pi}}_{n}	 
}, & n &\in \bbZ_+,
\nonumber
\\
(-\presub{(n)}\vc{Q})^{-1} 
&=
\bordermatrix{
        			& 
\bbL_0 			& 
\bbL_1 			& 
\cdots  			& 
\bbL_n 
\cr
\bbL_0			& 
\presub{(n)}\vc{X}_{0,0}  	& 
\presub{(n)}\vc{X}_{0,1}  	& 
\cdots			&
\presub{(n)}\vc{X}_{0,n}  	 
\cr
\bbL_1		 	& 
\presub{(n)}\vc{X}_{1,0}  	& 
\presub{(n)}\vc{X}_{1,1}  	& 
\cdots			&
\presub{(n)}\vc{X}_{1,n}  	 
\cr
~\vdots			&
\vdots			&
\vdots			&
\ddots			&
\vdots						
\cr
\bbL_n			& 
\presub{(n)}\vc{X}_{n,0}  	& 
\presub{(n)}\vc{X}_{n,1}  	& 
\cdots			&
\presub{(n)}\vc{X}_{n,n}  	 
}, & n &\in \bbZ_+. 
\label{defn-(-Q)^{-1}-MG1-type}
\end{align}
From (\ref{defn-(-Q)^{-1}-MG1-type}) and (\ref{cond-(n_s)alpha}), we have
\begin{equation}
\presub{(n)}\wh{\vc{\alpha}}
(-\presub{(n)}\vc{Q})^{-1}
=\vc{\alpha}_n 
(\presub{(n)}\vc{X}_{n,0},\presub{(n)}\vc{X}_{n,1},\dots,\presub{(n)}\vc{X}_{n,n}),\qquad n \in \bbZ_+.
\label{eqn-(n)wh{alpha}(-(n)Q)^{-1}}
\end{equation}
Substituting (\ref{eqn-(n)wh{alpha}(-(n)Q)^{-1}}) into (\ref{defn-(n)ol{pi}_n}) yields
\[
\presub{(n)}\wh{\vc{\pi}}
=
{\vc{\alpha}_n 
(\presub{(n)}\vc{X}_{n,0},\presub{(n)}\vc{X}_{n,1},\dots,\presub{(n)}\vc{X}_{n,n})
\over
\vc{\alpha}_n \sum_{\ell=0}^n \presub{(n)}\vc{X}_{n,\ell}\vc{e}
},\qquad n \in \bbZ_+,
\]
which leads to
\begin{eqnarray}
\presub{(n)}\wh{\vc{\pi}}_{k}
=
{\vc{\alpha}_n 
\presub{(n)}\vc{X}_{n,k}
\over
\vc{\alpha}_n \sum_{\ell=0}^n \presub{(n)}\vc{X}_{n,\ell}\vc{e}
}, \qquad n \in \bbZ_+,\ k \in \bbZ_n.
\label{eqn-(n)pi_{n,k}}
\end{eqnarray}
Note that, because $(-\presub{(n)}\vc{Q})^{-1}\presub{(n)}\vc{Q}=-\vc{I}$, the inverse matrix $(-\presub{(n)}\vc{Q})^{-1} \ge \vc{O}$ has no zero rows. Therefore,
\begin{eqnarray}
\sum_{\ell=0}^n \presub{(n)}\vc{X}_{n,\ell}\vc{e} &>& \vc{0},
\qquad n \in \bbZ_+,\ \ell \in \bbZ_n.
\label{ineqn-(n)X_{n,l}e>0}
\end{eqnarray}

We derive a matrix-product form of $\presub{(n)}\wh{\vc{\pi}}_{k}$, $k\in\bbZ_n$, from (\ref{eqn-(n)pi_{n,k}}). To do this, we need some preparation. We first partition $\presub{(n)}\vc{Q}$ as
\[
\presub{(n)}\vc{Q}
= \left(
\begin{array}{cccc|c}
   						   &         			 	&  &      &	\,\vc{Q}_{0,n}
\\
						   &    & \presub{(n-1)}\vc{Q} &      &  \,\vdots
\\
\rule[-2mm]{0mm}{4mm}	   &                    	&  &      &	\,\vc{Q}_{n-1,n}
\\
\hline
\rule{0mm}{4mm}\vc{O} & \cdots & \vc{O} & \vc{Q}_{n,n-1} & \, \vc{Q}_{n,n}
\end{array}
\right),\qquad n \in \bbN.
\]
From this equation and (\ref{defn-(-Q)^{-1}-MG1-type}), we have the following  (see the last two equations in \cite[Section~0.7.3]{Horn13}): For $n \in \bbN$,
\begin{eqnarray}
\presub{(n)}\vc{X}_{n,n}
&=&
\left[
- \vc{Q}_{n,n}
- (\vc{O},\dots,\vc{O},\vc{Q}_{n,n-1})
(-\presub{(n-1)}\vc{Q})^{-1}
\left(
\begin{array}{c}
\vc{Q}_{0,n} 
\\
\vc{Q}_{1,n} 
\\
\vdots
\\
\vc{Q}_{n-1,n} 
\end{array}
\right)
\right]^{-1}
\nonumber
\\
&=& \left( 
- \vc{Q}_{n,n} 
- \vc{Q}_{n,n-1} \sum_{\ell=0}^{n-1} \presub{(n-1)}\vc{X}_{n-1,\ell}\vc{Q}_{\ell,n} \right)^{-1},
\label{defn-(n)X_{n,n}}
\end{eqnarray}
and
\begin{eqnarray}
\presub{(n)}\vc{X}_{n,k}
&=& \presub{(n)}\vc{X}_{n,n} \cdot (\vc{O},\dots,\vc{O},\vc{Q}_{n,n-1})
(-\presub{(n-1)}\vc{Q})^{-1}
\nonumber
\\
&=& \presub{(n)}\vc{X}_{n,n} \cdot \vc{Q}_{n,n-1}\presub{(n-1)}\vc{X}_{n-1,k},
\qquad k \in \bbZ_{n-1}.
\label{defn-(n)X_{n,k}}
\end{eqnarray}
We also define
\begin{align}
\vc{U}_n^*
&= \presub{(n)}\vc{X}_{n,n},
\label{eqn-U_k^*=(n)X_{n,n}}
\\
\vc{U}_{n,k}
&= 
\left\{
\begin{array}{ll}
\vc{Q}_{n,n-1}\presub{(n-1)}\vc{X}_{n-1,k}, & \quad k \in \bbZ_{n-1},
\\
\vc{I},                                     & \quad k=n,
\end{array}
\right.
\label{defn-U_{n,k}}
\end{align}
for $n \in \bbZ_+$. It then follows from (\ref{defn-(n)X_{n,n}}), (\ref{eqn-U_k^*=(n)X_{n,n}}), and (\ref{defn-U_{n,k}}) that 
\begin{equation}
\vc{U}_n^*
=
\left\{
\begin{array}{l@{~~~}l}
(-\vc{Q}_{0,0})^{-1}, & n=0,
\\
\left(
-\vc{Q}_{n,n} 
- 
\dm\sum_{\ell=0}^{n-1} \vc{U}_{n,\ell} \vc{Q}_{\ell,n}
\right)^{-1},
& n \in \bbN.
\end{array}
\right.
\label{defn-U_k^*}
\end{equation}
Using $\vc{U}_n^*$ and $\vc{U}_{n,k}$, we can express $\presub{(n)}\vc{X}_{n,k}$, $k\in\bbZ_n$, as follows.
\begin{lem}\label{lem-(n_s)X_{s,l}}
For $n \in \bbZ_+$,
\begin{equation}
\presub{(n)}\vc{X}_{n,k} 
= \vc{U}_n^* \vc{U}_{n,k},
\qquad k \in \bbZ_n,
\label{eqn-(n)X_{n,k}}
\end{equation}
and
\begin{equation}
\vc{U}_{n,k}
=
\left\{
\begin{array}{l@{~~}l}
(\vc{Q}_{n,n-1}   \vc{U}_{n-1}^*) 
(\vc{Q}_{n-1,n-2} \vc{U}_{n-2}^*)  \cdots 
(\vc{Q}_{k+1,k}\vc{U}_{k}^*), & k \in \bbZ_{n-1},
\\
\vc{I}, & k = n.
\end{array}
\right.
\label{defn-U_{k,l}}
\end{equation}
\end{lem}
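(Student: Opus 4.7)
The plan is to prove both identities simultaneously by induction on $n \in \bbZ_+$, exploiting the recursive block-Schur-complement relations (\ref{defn-(n)X_{n,n}})--(\ref{defn-(n)X_{n,k}}) inherited from the partitioning of $\presub{(n)}\vc{Q}$.

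For the base case $n=0$, the only relevant index is $k=n=0$, and the claim reduces to $\presub{(0)}\vc{X}_{0,0}=\vc{U}_0^*\vc{U}_{0,0}$ with $\vc{U}_{0,0}=\vc{I}$. This is immediate from (\ref{eqn-U_k^*=(n)X_{n,n}}) and the first line of (\ref{defn-U_k^*}), since $\presub{(0)}\vc{Q}=\vc{Q}_{0,0}$.

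For the induction step, assume (\ref{eqn-(n)X_{n,k}}) and (\ref{defn-U_{k,l}}) hold at level $n-1$. I would first verify (\ref{eqn-(n)X_{n,k}}) at level $n$ by splitting into two cases. For $k=n$, it is immediate from (\ref{eqn-U_k^*=(n)X_{n,n}}) and $\vc{U}_{n,n}=\vc{I}$. For $k \in \bbZ_{n-1}$, applying (\ref{defn-(n)X_{n,k}}) gives $\presub{(n)}\vc{X}_{n,k}=\presub{(n)}\vc{X}_{n,n}\vc{Q}_{n,n-1}\presub{(n-1)}\vc{X}_{n-1,k}=\vc{U}_n^*(\vc{Q}_{n,n-1}\presub{(n-1)}\vc{X}_{n-1,k})$, and the bracketed factor coincides with $\vc{U}_{n,k}$ by its defining relation (\ref{defn-U_{n,k}}). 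Note that the expression (\ref{defn-U_k^*}) for $\vc{U}_n^*$ is consistent with this setup: substituting $\presub{(n-1)}\vc{X}_{n-1,\ell}=\vc{U}_{n-1}^*\vc{U}_{n-1,\ell}$ from the inductive hypothesis into (\ref{defn-(n)X_{n,n}}) recovers exactly the formula in (\ref{defn-U_k^*}) via $\vc{U}_{n,\ell}=\vc{Q}_{n,n-1}\vc{U}_{n-1}^*\vc{U}_{n-1,\ell}$, so there is no circularity.

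For (\ref{defn-U_{k,l}}) at level $n$, the case $k=n$ is again by definition. For $k \in \bbZ_{n-1}$, substitute the inductive hypothesis for $\presub{(n-1)}\vc{X}_{n-1,k}=\vc{U}_{n-1}^*\vc{U}_{n-1,k}$ into the defining relation (\ref{defn-U_{n,k}}) to obtain
\[
\vc{U}_{n,k}
=\vc{Q}_{n,n-1}\vc{U}_{n-1}^*\vc{U}_{n-1,k}
=(\vc{Q}_{n,n-1}\vc{U}_{n-1}^*)\vc{U}_{n-1,k}.
\]
The product expansion of $\vc{U}_{n-1,k}$ from the inductive hypothesis then yields the desired telescoping product, with the edge case $k=n-1$ handled separately using $\vc{U}_{n-1,n-1}=\vc{I}$ so that $\vc{U}_{n,n-1}=\vc{Q}_{n,n-1}\vc{U}_{n-1}^*$, matching (\ref{defn-U_{k,l}}) with a single factor. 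The main thing to be careful about is bookkeeping of the endpoints of the induction, in particular ensuring that the two boundary cases $k=n$ and $k=n-1$ (where the product degenerates) are consistent with the common formula; no substantive analytical obstacle arises.
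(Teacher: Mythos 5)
Your proof is correct and follows essentially the same route as the paper: both rest on the Schur-complement recursions (\ref{defn-(n)X_{n,n}})--(\ref{defn-(n)X_{n,k}}) together with the definitions (\ref{eqn-U_k^*=(n)X_{n,n}})--(\ref{defn-U_{n,k}}), and your induction merely makes explicit the unrolling of the recursion $\vc{U}_{n,k}=\vc{Q}_{n,n-1}\vc{U}_{n-1}^*\vc{U}_{n-1,k}$ that the paper leaves implicit. The only cosmetic difference is that (\ref{eqn-(n)X_{n,k}}) actually follows directly from the definitions without the inductive hypothesis, so the induction is needed only for the product expansion (\ref{defn-U_{k,l}}).
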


\begin{proof}
Combining (\ref{defn-(n)X_{n,k}}) with (\ref{eqn-U_k^*=(n)X_{n,n}}) and (\ref{defn-U_{n,k}}), we have (\ref{eqn-(n)X_{n,k}}). Furthermore, applying (\ref{eqn-(n)X_{n,k}}) to (\ref{defn-U_{n,k}}) yields
\[
\vc{U}_{n,k}
= \vc{Q}_{n,n-1} \vc{U}_{n-1}^* \vc{U}_{n-1,k},\qquad k \in \bbZ_{n-1},
\]
which leads to (\ref{defn-U_{k,l}}). 
\qed
\end{proof}

\begin{rem}
A result similar to Lemma~\ref{lem-(n_s)X_{s,l}} is presented in 
Shin~\cite{Shin09} under the condition that $\presub{(n)}\vc{Q}$ is block tridiagonal (see Theorem~2.1 therein).
\end{rem}

\begin{rem}\label{rem-(n)X_{k,l}}
The matrices $\presub{(n)}\vc{X}_{k,\ell}$, $\ell \in \bbZ_k$, and $\vc{U}_{n,\ell}$, $\ell \in \bbZ_{n-1}$, have probabilistic interpretations.
The $(i,j)$-th element of $\presub{(n)}\vc{X}_{k,\ell}$ represents the expected total sojourn time in state $(\ell,j)$ before the first visit to $\ol{\bbS}_n$ (i.e., to any state above level $n$) starting from state $(k,i)$ (see, e.g., \cite[Theorem~2.4.3]{Lato99}).
Furthermore, the $(i,j)$-th element of $\vc{U}_{n,\ell}$ represents the expected total sojourn time  in state $(\ell,j)$ before the first visit to $\ol{\bbS}_n$ starting from state $(n,i)$, measured per unit of time spent in state $(n,i)$. Thus, we have (see \cite[Equation~(5.33)]{Lato99})
\begin{equation}
\vc{\pi}_{\ell} = \vc{\pi}_n \vc{U}_{n,\ell},
\qquad n \in \bbN,\ \ell \in \bbZ_n.
\label{eqn-pi_l-pi_k*U_{k,l}}
\end{equation}
\end{rem}

\medskip

We now obtain a matrix-product form of $\presub{(n)}\wh{\vc{\pi}}_{k}$, $k\in\bbZ_n$, by substituting (\ref{eqn-(n)X_{n,k}}) into (\ref{eqn-(n)pi_{n,k}}).
\begin{lem}\label{lem-product-form-(s)ol{pi}_s}
\begin{eqnarray}
\presub{(n)}\wh{\vc{\pi}}_{k}
&=&
{
\vc{\alpha}_n \vc{U}_n^* \vc{U}_{n,k}
\over 
\vc{\alpha}_n \sum_{\ell=0}^n \vc{U}_n^* \vc{U}_{n,\ell}\vc{e}
}, \qquad n \in \bbZ_+,\ k \in \bbZ_n. 
\label{eqn-(n_s)pi_{n_s,l}}
\end{eqnarray}
\end{lem}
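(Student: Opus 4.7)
The plan is essentially a direct substitution, since all the hard work has been done in Lemma~\ref{lem-(n_s)X_{s,l}}. Formula (\ref{eqn-(n)pi_{n,k}}) already gives the level-wise blocks $\presub{(n)}\wh{\vc{\pi}}_{k}$ in terms of $\vc{\alpha}_n$ and the blocks $\presub{(n)}\vc{X}_{n,\ell}$ of $(-\presub{(n)}\vc{Q})^{-1}$. First I would quote (\ref{eqn-(n)pi_{n,k}}) and then replace each $\presub{(n)}\vc{X}_{n,\ell}$, both in the numerator and inside the sum in the denominator, by $\vc{U}_n^* \vc{U}_{n,\ell}$ using the identity (\ref{eqn-(n)X_{n,k}}) established in Lemma~\ref{lem-(n_s)X_{s,l}}. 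Factoring nothing further is needed: the claimed expression (\ref{eqn-(n_s)pi_{n_s,l}}) pops out immediately.

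Before writing down the final equation, I would briefly verify that the denominator is nonzero so that the ratio is well defined. This is where (\ref{ineqn-(n)X_{n,l}e>0}) comes in: it gives $\sum_{\ell=0}^n \presub{(n)}\vc{X}_{n,\ell}\vc{e} > \vc{0}$, and since $\vc{\alpha}_n$ is (the nontrivial block of) a probability vector it is nonnegative and nonzero, so $\vc{\alpha}_n \sum_{\ell=0}^n \presub{(n)}\vc{X}_{n,\ell}\vc{e} > 0$. Substituting $\presub{(n)}\vc{X}_{n,\ell} = \vc{U}_n^* \vc{U}_{n,\ell}$ preserves this positivity, so the denominator in (\ref{eqn-(n_s)pi_{n_s,l}}) is strictly positive.

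There is no genuine obstacle here; the only thing to watch is making the indexing explicit. In particular, I would note that the case $\ell = n$ in the denominator corresponds to $\vc{U}_{n,n} = \vc{I}$, so the $\ell = n$ term equals $\vc{\alpha}_n \vc{U}_n^* \vc{e}$, while for $\ell \in \bbZ_{n-1}$ the factor $\vc{U}_{n,\ell}$ is the explicit product in (\ref{defn-U_{k,l}}). No separate argument is needed for $n=0$: in that case the sum contains only the $\ell=0=n$ term and the formula reduces consistently to $\presub{(0)}\wh{\vc{\pi}}_0 = \vc{\alpha}_0 \vc{U}_0^*/(\vc{\alpha}_0\vc{U}_0^*\vc{e})$, matching (\ref{defn-(n)ol{pi}_n}) together with $\vc{U}_0^* = (-\vc{Q}_{0,0})^{-1}$ from (\ref{defn-U_k^*}). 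With these remarks the proof concludes in a single display.
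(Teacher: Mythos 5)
Your proof is correct and is exactly the paper's argument: the lemma is obtained by substituting the identity $\presub{(n)}\vc{X}_{n,k} = \vc{U}_n^* \vc{U}_{n,k}$ from Lemma~\ref{lem-(n_s)X_{s,l}} into (\ref{eqn-(n)pi_{n,k}}). The additional remarks on positivity of the denominator and the $n=0$ case are harmless and consistent with (\ref{ineqn-(n)X_{n,l}e>0}).
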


\begin{rem}
Equations (\ref{ineqn-(n)X_{n,l}e>0}) and (\ref{eqn-(n)X_{n,k}}) lead to 
\begin{equation}
\sum_{\ell=0}^n \vc{U}_n^* \vc{U}_{n,\ell}\vc{e}>\vc{0},
\qquad n \in \bbZ_+,\ \ell \in \bbZ_n.
\label{ineqn-U_n^*-U_{n,l}e>0}
\end{equation}
\end{rem}

\subsection{An error bound}\label{subsec-error-bound}

In this subsection, we present an error bound for the LBCL-augmented truncation approximation $\presub{(n)}\wh{\vc{\pi}}$ to $\vc{\pi}$. The error bound is used to develop an algorithm for computing $\vc{\pi}$ in the next section.

To derive the error bound, we assume a Foster-Lyapunov drift condition.
\begin{cond}\label{cond-01}
The generator $\vc{Q}$ is irreducible, and there exist a constant $b \in (0,\infty)$, a finite set $\bbC \subset \bbS$, and a positive column vector $\vc{v}:=(v(k,i))_{(k,i)\in\bbS}$ such that $\inf_{(k,i)\in\bbS}v(k,i) > 0$ and
\begin{equation}
\vc{Q}\vc{v} \le  - \vc{e} + b \vc{1}_{\bbC},
\label{ineqn-QV<=-f+b1_C}
\end{equation}
where
$\vc{1}_{\bbB}:=(1_{\bbB}(k,i))_{(k,i)\in\bbS}$, $\bbB \subseteq \bbS$, denotes a column
vector defined by
\[
1_{\bbB}(k,i)
=\left\{
\begin{array}{l@{~~~}l}
1, & (k,i) \in \bbS,
\\
0, & (k,i) \in \bbS\setminus\bbB.
\end{array}
\right.
\]
\end{cond}

\begin{rem}
Recall that $\vc{Q}$ is the generator of the regular-jump Markov chain $\{(X(t),J(t))\}$ (see Section~\ref{sec-intro}) and thus $\vc{Q}$ is stable, i.e., $|q(\ell,j;\ell,j)| < \infty$ for all $(\ell,j) \in \bbS$ (see, e.g., \cite[Chapter~8, Definition~2.4 and Theorem~3.4]{Brem99}). The irreducibility of $\vc{Q}$ and the finiteness of $\bbC$ imply that
\[
\inf_{(k,i)\in\bbC} p^t(k,i;\ell,j) > 0\quad\mbox{for all $t > 0$ and $(\ell,j)\in \bbS$},
\]
which shows that $\bbC$ is a {\it small set} (see, e.g., \cite{Kont16}). Therefore, if Condition~\ref{cond-01} holds, then the irreducible generator $\vc{Q}$ is ergodic (see, e.g., \cite[Theorem~1.1]{Kont16}).
\end{rem}

Let $\vc{\Phi}^{(\beta)}:=(\phi^{(\beta)}(k,i;\ell,j)_{(k,i;\ell,j)\in\bbS}$ denote a stochastic matrix such that
\begin{equation*}
\vc{\Phi}^{(\beta)}
= \int_0^{\infty} \beta \re^{- \beta t} \vc{P}^{(t)} \rd t,
\qquad \beta > 0,
\end{equation*}
where $\vc{P}^{(t)}:=(p^{(t)}(k,i;\ell,j))_{(k,i;\ell,j)\in\bbS}$, $t \in \bbR_+$, is the transition matrix function of the Markov chain $\{(X(t),J(t))\}$ with generator $\vc{Q}$, i.e., 
\[
\PP((X(t),J(t)) = (\ell,j) \mid (X(0),J(0)) = (k,i)),\qquad
(k,i;\ell,j)\in\bbS^2.
\]
Because $\vc{Q}$ is ergodic, we have $\vc{\Phi}^{(\beta)} > \vc{O}$. We also define $\overline{\phi}_{\bbC}^{(\beta)}$, $\beta>0$, as
\begin{eqnarray*}
\overline{\phi}_{\bbC}^{(\beta)}
&=& \sup_{(\ell,j) \in \bbS}\min_{(k,i)\in\bbC} \phi^{(\beta)}(k,i;\ell,j) 
> 0,\qquad \beta > 0.
\end{eqnarray*}
We then have the following result from \cite[Theorem~2.1]{LLM2018} with $\vc{f}=\vc{g}=\vc{e}$.
\begin{prop}\label{prop-Lem-2.1-LLM2018}
Under Condition~\ref{cond-01}, the following holds:
\begin{equation}
\| \presub{(n)}\wh{\vc{\pi}} - \vc{\pi} \|
\le 2 \cdot \presub{(n)}\wh{\vc{\pi}}\hms
| \presub{(n)}\wh{\vc{Q}} - \vc{Q} |
\left(
\vc{v} 
+ 
{ b \over \beta\overline{\phi}_{\bbC}^{(\beta)} } \vc{e}
\right),\quad n\in\bbZ_+,~\beta > 0,
\label{bound-|D|g}
\end{equation}
where, for any vector $\vc{m}:=(m(i))$, $\|\vc{m}\|$ denotes the total variation norm of $\vc{m}$, i.e., $\|\vc{m}\| = \sum_i |m(i)|$.
\end{prop}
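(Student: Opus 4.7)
The plan is to obtain (\ref{bound-|D|g}) by the standard Poisson-equation comparison, using the Lyapunov function $\vc{v}$ and the minorization implicit in $\overline{\phi}_\bbC^{(\beta)}$ to control the comparison term. First, I would extend $\presub{(n)}\wh{\vc{Q}}$ and $\presub{(n)}\wh{\vc{\pi}}$ to act on all of $\bbS$ by zero-padding, exactly as in the convention used for (\ref{eqn-(n)wh{pi}((n)wh{Q}-Q)}), and fix an arbitrary function $h:\bbS \to [0,1]$. Let $\vc{g}_h$ denote the centered solution of the Poisson equation $\vc{Q}\vc{g}_h = (\vc{\pi}h)\vc{e} - h$; existence and the representation $\vc{g}_h = -\int_0^\infty \vc{P}^{(t)}(h - (\vc{\pi}h)\vc{e})\,\rd t$ follow from the ergodicity of $\vc{Q}$ under Condition~\ref{cond-01}. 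Using $\presub{(n)}\wh{\vc{\pi}}\vc{e}=1$ and $\presub{(n)}\wh{\vc{\pi}}\presub{(n)}\wh{\vc{Q}} = \vc{0}$, a direct rearrangement gives the identity
\[
(\presub{(n)}\wh{\vc{\pi}} - \vc{\pi})h
= \presub{(n)}\wh{\vc{\pi}}(\presub{(n)}\wh{\vc{Q}} - \vc{Q})\vc{g}_h,
\]
so that $|(\presub{(n)}\wh{\vc{\pi}}-\vc{\pi})h| \le \presub{(n)}\wh{\vc{\pi}}\,| \presub{(n)}\wh{\vc{Q}} - \vc{Q} |\,|\vc{g}_h|$.

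The main step is then an element-wise bound $|\vc{g}_h| \le \vc{v} + (b/(\beta \overline{\phi}_\bbC^{(\beta)}))\vc{e}$ uniform in $h$. For this I would uniformize at rate $\beta$ and work with the resolvent chain $\vc{\Phi}^{(\beta)} = \beta(\beta \vc{I} - \vc{Q})^{-1}$, which is stochastic, shares $\vc{\pi}$, and satisfies $\vc{\Phi}^{(\beta)}\vc{v} - \vc{v} = \beta^{-1}\vc{\Phi}^{(\beta)}\vc{Q}\vc{v}$. Applying (\ref{ineqn-QV<=-f+b1_C}) gives a Foster-type drift for $\vc{\Phi}^{(\beta)}$, while the definition of $\overline{\phi}_\bbC^{(\beta)}$ supplies a minorization of $\vc{\Phi}^{(\beta)}$ on the small set $\bbC$. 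Standard Meyn--Tweedie-style estimates for the Poisson solution of a minorized, geometrically ergodic chain then produce the claimed bound: the $\vc{v}$ term absorbs the excursion outside $\bbC$, and the $b/(\beta\overline{\phi}_\bbC^{(\beta)})$ term accounts for the expected cost of the return through $\bbC$.

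Finally, taking the supremum over $h$ with $0\le h \le 1$ and invoking the identity $\|\vc{\mu}\| = 2\sup_{0\le h\le 1}|\vc{\mu}h|$, valid for any signed measure $\vc{\mu}$ with $\vc{\mu}\vc{e}=0$ (such as $\presub{(n)}\wh{\vc{\pi}} - \vc{\pi}$), produces the factor $2$ and yields (\ref{bound-|D|g}). The delicate point is the middle step: the constant $b/(\beta\overline{\phi}_\bbC^{(\beta)})$ must be extracted cleanly from the minorization of $\vc{\Phi}^{(\beta)}$ on $\bbC$, and this is precisely where the resolvent-level translation of the continuous-time drift condition pays off. Since this is done in general in \cite{LLM2018}, Proposition~\ref{prop-Lem-2.1-LLM2018} is then the direct specialization of Theorem~2.1 of that reference to the choice $\vc{f}=\vc{g}=\vc{e}$.
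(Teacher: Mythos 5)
Your proposal is correct and ultimately rests on the same ground as the paper, which offers no internal proof of Proposition~\ref{prop-Lem-2.1-LLM2018} and simply invokes Theorem~2.1 of \cite{LLM2018} with $\vc{f}=\vc{g}=\vc{e}$; your closing sentence identifies exactly this specialization. The Poisson-equation sketch you give (the identity $(\presub{(n)}\wh{\vc{\pi}}-\vc{\pi})h=\presub{(n)}\wh{\vc{\pi}}(\presub{(n)}\wh{\vc{Q}}-\vc{Q})\vc{g}_h$, the uniform bound $|\vc{g}_h|\le\vc{v}+(b/(\beta\overline{\phi}_{\bbC}^{(\beta)}))\vc{e}$ via the resolvent chain and minorization, and the factor $2$ from $\|\vc{\mu}\|=2\sup_{0\le h\le1}|\vc{\mu}h|$ for $\vc{\mu}\vc{e}=0$) is a faithful outline of how that cited result is established, so nothing further is needed here.
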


From Proposition~\ref{prop-Lem-2.1-LLM2018}, we derive a more informative bound for $\| \presub{(n)}\wh{\vc{\pi}} - \vc{\pi} \|$. For this purpose, we define 
 $\vc{U}^*_{n,k}$, $n \in \bbZ_+$, $k\in\bbZ_n$, as
\begin{equation}
\vc{U}^*_{n,k}
= \vc{U}_n^* \vc{U}_{n,k},
\qquad n \in \bbZ_+,\ k\in\bbZ_n.
\label{defn-U_{n,k}^*}
\end{equation}
We also define $\vc{u}^*_n:=(u_n^*(i))_{i\in\bbM_n}$, $n \in \bbZ_+$, as
\begin{equation}
\vc{u}^*_n
= \sum_{\ell=0}^n \vc{U}^*_{n,\ell}\vc{e}
=   \sum_{\ell=0}^n \vc{U}_n^* \vc{U}_{n,\ell}\vc{e} > \vc{0},
\qquad n \in \bbZ_+,
\label{defn-u_n^*}
\end{equation}
where $\vc{u}^*_n > \vc{0}$ due to (\ref{ineqn-U_n^*-U_{n,l}e>0}).
Using (\ref{defn-U_{n,k}^*}) and (\ref{defn-u_n^*}), we rewrite (\ref{eqn-(n_s)pi_{n_s,l}}) as
\begin{eqnarray}
\presub{(n)}\wh{\vc{\pi}}_k
= {
\vc{\alpha}_n \vc{U}^*_{n,k}
\over 
\vc{\alpha}_n \vc{u}^*_n
},
\qquad n \in \bbZ_+,\ k\in\bbZ_n.
\label{eqn-(n)wh{pi}_k}
\end{eqnarray}
\begin{thm}\label{thm-bound}
If Condition~\ref{cond-01} holds, then
\begin{eqnarray}
\| \presub{(n)}\wh{\vc{\pi}} - \vc{\pi} \|
&\le& E(n), \qquad n \in \bbZ_+,
\label{bound-pi-g}
\end{eqnarray}
where $E(\,\cdot\,):=E^{(\beta)}(\,\cdot\,)$, called the {\it error bound function}, is given by
\begin{eqnarray}
E(n)
&=& {2 \over \vc{\alpha}_n \vc{u}_n^* }
\!
\left\{
\vc{\alpha}_n \!
\left(
\vc{v}_n 
+ 
\sum_{k=0}^n \vc{U}_{n,k}^*
\sum_{\ell=n+1}^{\infty} \vc{Q}_{k,\ell} \vc{v}_{\ell}
\right)
\!
+ { 2b \over \beta\overline{\phi}_{\bbC}^{(\beta)} } 
\right\},~ n \in \bbZ_+, \quad~~~~
\label{defn-E(n)}
\end{eqnarray}
with $\beta > 0$.
\end{thm}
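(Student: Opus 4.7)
The plan is to start from Proposition~\ref{prop-Lem-2.1-LLM2018} and then evaluate the right-hand side explicitly by exploiting the block structure of $|\presub{(n)}\wh{\vc{Q}} - \vc{Q}|$ in (\ref{eqn-(n)wh{pi}((n)wh{Q}-Q)}) together with the matrix-product form of $\presub{(n)}\wh{\vc{\pi}}$ in (\ref{eqn-(n)wh{pi}_k}). Let $\vc{g} = \vc{v} + (b/\beta\ol{\phi}_{\bbC}^{(\beta)})\vc{e}$, so that Proposition~\ref{prop-Lem-2.1-LLM2018} reads $\|\presub{(n)}\wh{\vc{\pi}}-\vc{\pi}\| \le 2\,\presub{(n)}\wh{\vc{\pi}}\,|\presub{(n)}\wh{\vc{Q}}-\vc{Q}|\,\vc{g}$, and the task reduces to bounding $\presub{(n)}\wh{\vc{\pi}}\,|\presub{(n)}\wh{\vc{Q}}-\vc{Q}|\,\vc{g}$.

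By (\ref{eqn-(n)wh{pi}((n)wh{Q}-Q)}), this quantity splits into two pieces: the $\bbS_n$-piece $-\presub{(n)}\wh{\vc{\pi}}\presub{(n)}\vc{Q}\vc{e}\,\presub{(n)}\wh{\vc{\alpha}}\vc{g}$ and the $\ol{\bbS}_n$-piece $\presub{(n)}\wh{\vc{\pi}}\presub{(n)}\vc{Q}_{>n}\vc{g}$. For the first piece, apply (\ref{eqn-(n)wh{alpha}v}) to obtain $\presub{(n)}\wh{\vc{\alpha}}\vc{v} = \vc{\alpha}_n\vc{v}_n$ and $\presub{(n)}\wh{\vc{\alpha}}\vc{e} = 1$, so the inner product equals $\vc{\alpha}_n\vc{v}_n + b/(\beta\ol{\phi}_{\bbC}^{(\beta)})$. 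For the second piece, expand level-wise as $\sum_{k=0}^n \presub{(n)}\wh{\vc{\pi}}_k \sum_{\ell=n+1}^\infty \vc{Q}_{k,\ell}\bigl(\vc{v}_\ell + (b/\beta\ol{\phi}_{\bbC}^{(\beta)})\vc{e}\bigr)$; the $\vc{e}$ part collapses, using $\sum_{\ell=n+1}^\infty \vc{Q}_{k,\ell}\vc{e} = -(\presub{(n)}\vc{Q}\vc{e})_k$, into another contribution of $(b/\beta\ol{\phi}_{\bbC}^{(\beta)})\cdot(-\presub{(n)}\wh{\vc{\pi}}\presub{(n)}\vc{Q}\vc{e})$.

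The key scalar then to identify is $r_n := -\presub{(n)}\wh{\vc{\pi}}\presub{(n)}\vc{Q}\vc{e}$. From the defining formula (\ref{defn-(n)ol{pi}_n}), $\presub{(n)}\wh{\vc{\pi}}(-\presub{(n)}\vc{Q})\vc{e} = 1/\bigl[\presub{(n)}\wh{\vc{\alpha}}(-\presub{(n)}\vc{Q})^{-1}\vc{e}\bigr]$; plugging in (\ref{eqn-(n)wh{alpha}(-(n)Q)^{-1}}) and Lemma~\ref{lem-(n_s)X_{s,l}} together with (\ref{defn-U_{n,k}^*})--(\ref{defn-u_n^*}) yields $r_n = 1/(\vc{\alpha}_n\vc{u}_n^*)$. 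Combining the two pieces, the total becomes
\[
\presub{(n)}\wh{\vc{\pi}}\,|\presub{(n)}\wh{\vc{Q}}-\vc{Q}|\,\vc{g}
= r_n\!\left(\vc{\alpha}_n\vc{v}_n + \frac{2b}{\beta\ol{\phi}_{\bbC}^{(\beta)}}\right)
+ \sum_{k=0}^n \presub{(n)}\wh{\vc{\pi}}_k \sum_{\ell=n+1}^\infty \vc{Q}_{k,\ell}\vc{v}_\ell,
\]
and substituting $\presub{(n)}\wh{\vc{\pi}}_k = \vc{\alpha}_n\vc{U}_{n,k}^*/(\vc{\alpha}_n\vc{u}_n^*)$ from (\ref{eqn-(n)wh{pi}_k}) factors out $1/(\vc{\alpha}_n\vc{u}_n^*)$, reproducing exactly $\tfrac{1}{2}E(n)$ after multiplying by $2$.

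The argument is essentially a bookkeeping computation once Proposition~\ref{prop-Lem-2.1-LLM2018} is in hand; the only nontrivial step is the identification $-\presub{(n)}\wh{\vc{\pi}}\presub{(n)}\vc{Q}\vc{e} = 1/(\vc{\alpha}_n\vc{u}_n^*)$, which is where the LBCL structure of $\presub{(n)}\wh{\vc{\alpha}}$ and the matrix-product representation from Lemma~\ref{lem-(n_s)X_{s,l}} enter decisively. The rest is collecting like terms and recognizing the common factor $1/(\vc{\alpha}_n\vc{u}_n^*)$.
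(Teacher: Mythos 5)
Your proposal is correct and follows essentially the same route as the paper's proof: apply Proposition~\ref{prop-Lem-2.1-LLM2018}, split $\presub{(n)}\wh{\vc{\pi}}\,|\presub{(n)}\wh{\vc{Q}}-\vc{Q}|$ via (\ref{eqn-(n)wh{pi}((n)wh{Q}-Q)}), collapse the $\vc{e}$-part of the tail term using $\vc{Q}\vc{e}=\vc{0}$ to double the $b/(\beta\ol{\phi}_{\bbC}^{(\beta)})$ contribution, and identify $\presub{(n)}\wh{\vc{\pi}}(-\presub{(n)}\vc{Q}\vc{e})=1/(\vc{\alpha}_n\vc{u}_n^*)$ from (\ref{defn-(n)ol{pi}_n}) and (\ref{eqn-(n)wh{alpha}(-(n)Q)^{-1}}). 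All the key steps match the paper's argument.
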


\begin{rem}
The error bound function $E$ has a free parameter $\beta>0$ involved in the intractable factor $\ol{\phi}_{\bbC}^{(\beta)}$. Thus, it is, in general, difficult to discuss theoretically how $\beta$ impacts on the decay speed of $E$. Through numerical experiments, Masuyama~\cite{Masu17-JORSJ} investigates such a problem for the {\it last-column block-augmented truncation}, though the function $E$ is referred to therein as the {\it error decay function}, instead of the error bound function. Note that the last-column block-augmented truncation belongs to the class of {\it block-augmented truncations} (see \cite{LiHai00} for details). Therefore, the last-column block-augmented truncation is indeed different from our LBCL-augmented truncation, though they are fairly similar. 
\end{rem}

\medskip

\noindent
{\it Proof of Theorem~\ref{thm-bound}~} 
Suppose that Condition~\ref{cond-01} holds. It then follows from (\ref{eqn-(n)wh{pi}((n)wh{Q}-Q)}) that
\begin{eqnarray}
\lefteqn{
\presub{(n)}\wh{\vc{\pi}}
|\presub{(n)}\wh{\vc{Q}} - \vc{Q}|
\left(
\vc{v} 
+ { b \over \beta\overline{\phi}_{\bbC}^{(\beta)} } \vc{e}
\right)
}
\quad &&
\nonumber
\\
&\le& 
\left(
\begin{array}{c@{~}|@{~}c}
\presub{(n)}\wh{\vc{\pi}} & \vc{0}
\end{array}
\right)
\left(
\begin{array}{c@{~}|@{~}c}
-\presub{(n)}\vc{Q} \vc{e} \presub{(n)}\wh{\vc{\alpha}} &
\presub{(n)}\vc{Q}_{>n}
\rule[-2.5mm]{0mm}{2mm}{}
\\
\hline
* & *
\end{array}
\right) 
\left(
\vc{v} 
+ { b \over \beta\overline{\phi}_{\bbC}^{(\beta)} } \vc{e}
\right)
\nonumber
\\
&=& \left(
\begin{array}{c@{~}|@{~}c}
\presub{(n)}\wh{\vc{\pi}} (-\presub{(n)}\vc{Q}\vc{e}) \cdot
\presub{(n)}\wh{\vc{\alpha}} &
\presub{(n)}\wh{\vc{\pi}} \presub{(n)}\vc{Q}_{>n}
\end{array}
\right) 
\left(
\vc{v} 
+ { b \over \beta\overline{\phi}_{\bbC}^{(\beta)} } \vc{e}
\right),
\quad n \in \bbZ_+. \qquad~
\label{eqn-diff-pi}
\end{eqnarray}
Substituting (\ref{eqn-diff-pi}) into (\ref{bound-|D|g}), we have, for $n \in \bbZ_+$,
\begin{eqnarray}
\| \presub{(n)}\wh{\vc{\pi}} -  \vc{\pi} \|
&\le&  2 
\left(
\begin{array}{c@{~}|@{~}c}
\presub{(n)}\wh{\vc{\pi}} (-\presub{(n)}\vc{Q}\vc{e}) \cdot
\presub{(n)}\wh{\vc{\alpha}} &
\vc{0}
\end{array}
\right) 
\left(
\vc{v} 
+ { b \over \beta\overline{\phi}_{\bbC}^{(\beta)} } \vc{e}
\right)
\nonumber
\\
&&  {} + 2
\left(
\begin{array}{c@{~}|@{~}c}
\vc{0} &
\presub{(n)}\wh{\vc{\pi}} \presub{(n)}\vc{Q}_{>n}
\end{array}
\right) 
\left(
\vc{v} 
+ { b \over \beta\overline{\phi}_{\bbC}^{(\beta)} } \vc{e}
\right)
\nonumber
\\
&=& 2 \presub{(n)}\wh{\vc{\pi}} 
(-\presub{(n)}\vc{Q} \vc{e}) 
\cdot
\left(
\vc{\alpha}_n \vc{v}_n 
+ { b \over \beta\overline{\phi}_{\bbC}^{(\beta)} }
\right)
\nonumber
\\
&& {}
+ 2
\left[
\sum_{k=0}^n \presub{(n)}\wh{\vc{\pi}}_k \sum_{\ell=n+1}^{\infty}
\vc{Q}_{k,\ell}
\left(
\vc{v}_{\ell} 
+ { b \over \beta\overline{\phi}_{\bbC}^{(\beta)} }\vc{e}
\right)
\right], \qquad~~~
\label{eqn-diff-pi-02}
\end{eqnarray}
where the last equality holds due to (\ref{eqn-(n)wh{alpha}v}) and $\presub{(n)}\wh{\vc{\alpha}}\vc{e} =1$ for $n \in \bbZ_+$. 
Because $\vc{Q}\vc{e} = \vc{0}$,
\[
\sum_{k=0}^n \presub{(n)}\wh{\vc{\pi}}_k \sum_{\ell=n+1}^{\infty}
\vc{Q}_{k,\ell}\vc{e}
= \sum_{k=0}^n \presub{(n)}\wh{\vc{\pi}}_k 
\left(-\sum_{\ell=0}^n \vc{Q}_{k,\ell}\vc{e} \right)
= \presub{(n)}\wh{\vc{\pi}} (-\presub{(n)}\vc{Q}\vc{e}).
\]
Incorporating this into (\ref{eqn-diff-pi-02}), we have
\begin{eqnarray}
\| \presub{(n)}\wh{\vc{\pi}} -  \vc{\pi} \|
&\le& 2 \presub{(n)}\wh{\vc{\pi}} 
(-\presub{(n)}\vc{Q} \vc{e}) 
\cdot
\left(
\vc{\alpha}_n \vc{v}_n 
+ { 2b \over \beta\overline{\phi}_{\bbC}^{(\beta)} }
\right)
\nonumber
\\
&& {}
+ 2
\left[
\sum_{k=0}^n \presub{(n)}\wh{\vc{\pi}}_k \sum_{\ell=n+1}^{\infty}
\vc{Q}_{k,\ell}
\vc{v}_{\ell}
\right].
\label{eqn-diff-pi-03}
\end{eqnarray}
Note here that (\ref{defn-(n)ol{pi}_n}) and $\presub{(n)}\wh{\vc{\alpha}}\vc{e}=1$ yield 
\[
 \presub{(n)}\wh{\vc{\pi}} (-\presub{(n)}\vc{Q}\vc{e})
= 
{ 
\presub{(n)}\wh{\vc{\alpha}}\vc{e}
\over 
\presub{(n)}\wh{\vc{\alpha}} (-\presub{(n)}\vc{Q})^{-1}\vc{e}
}
= 
{
1
\over 
\presub{(n)}\wh{\vc{\alpha}} (-\presub{(n)}\vc{Q})^{-1}\vc{e}
}.
\]
Thus, we can rewrite (\ref{eqn-diff-pi-03}) as
\begin{eqnarray}
\| \presub{(n)}\wh{\vc{\pi}} -  \vc{\pi} \|
&\le&  
{
2
\over 
\presub{(n)}\wh{\vc{\alpha}} (-\presub{(n)}\vc{Q})^{-1}\vc{e}
}
\left(
\vc{\alpha}_n \vc{v}_n 
+ { 2b \over \beta\overline{\phi}_{\bbC}^{(\beta)} }
\right)
\nonumber
\\
&& {}
+ 2
\sum_{k=0}^n \presub{(n)}\wh{\vc{\pi}}_k \sum_{\ell=n+1}^{\infty}
\vc{Q}_{k,\ell}
\vc{v}_{\ell}. 
\label{eqn-diff-pi-04}
\end{eqnarray}
Furthermore, from (\ref{eqn-(n)wh{alpha}(-(n)Q)^{-1}}), 
(\ref{eqn-(n)X_{n,k}}), and (\ref{defn-u_n^*}), we have 
\begin{equation*}
\presub{(n)}\wh{\vc{\alpha}}(-\presub{(n)}\vc{Q})^{-1}\vc{e}
= \vc{\alpha}_n \sum_{\ell=0}^n\vc{U}_n^*\vc{U}_{n,\ell}\vc{e}
= \vc{\alpha}_n \vc{u}_n^*,
\qquad n \in \bbZ_+.
\end{equation*}
Applying this equation and (\ref{eqn-(n)wh{pi}_k}) to (\ref{eqn-diff-pi-04}), we obtain
\begin{eqnarray*}
\| \presub{(n)}\wh{\vc{\pi}} -  \vc{\pi} \|
&\le&  
{
2
\over 
\vc{\alpha}_n \vc{u}_n^*
}
\left(
\vc{\alpha}_n \vc{v}_n 
+ { 2b \over \beta\overline{\phi}_{\bbC}^{(\beta)} }
\right)
\nonumber
\\
&& {}
+ 2
\sum_{k=0}^n  
{\vc{\alpha}_n \vc{U}_{n,k}^* \over \vc{\alpha}_n \vc{u}_n^*} 
\sum_{\ell=n+1}^{\infty} \vc{Q}_{k,\ell} \vc{v}_{\ell}
\nonumber
\\
&=&  
{ 2
\over
\vc{\alpha}_n \vc{u}_n^*
}
\left\{
 \vc{\alpha}_n
\left(
\vc{v}_n 
+ 
\sum_{k=0}^n \vc{U}_{n,k}^*
\sum_{\ell=n+1}^{\infty} \vc{Q}_{k,\ell} \vc{v}_{\ell}
\right)
+ { 2b \over \beta\overline{\phi}_{\bbC}^{(\beta)} } 
\right\},
\end{eqnarray*}
which results in (\ref{bound-pi-g}) together with (\ref{defn-E(n)}).  \qed

\subsection{A counterexample to convergence}\label{subsec-example}

In the previous subsection, we have established the error bound for the LBCL-augmented truncation approximation $\presub{(n)}\wh{\vc{\pi}}$. 
We note that, even if the truncation parameter $n$ goes to infinity, $\presub{(n)}\wh{\vc{\pi}}$ does not necessarily converge to $\vc{\pi}$, in general.
However, it always holds that $\lim_{n\to\infty}\presub{(n)}\wh{\vc{\pi}}=\vc{\pi}$ for {\it special} upper block-Hessenberg Markov chains such that the block matrices $\vc{Q}_{k,\ell}$ are scalars. For such a special case, Gibson and Seneta \cite{Gibs87-JAP} prove that any augmented truncation approximation converges to the original stationary distribution as the truncation parameter goes to infinity (see Theorem 2.2 therein). Of course, this is not the case for {\it general} upper block-Hessenberg Markov chains. Indeed, we introduce a counterexample \cite{Kimura-Takine16}.

Fix $\bbM_n = \{1,2\}$ for all $n \in \bbZ_+$, and assume that the block matrices $\vc{Q}_{k,\ell}$ satisfy the following:
\begin{align}
\vc{Q}_{n,n} 
&=
\left(
\begin{array}{cc}
\star & \star
\\
0 & \star
\end{array}
\right),
&
\vc{Q}_{n,n+1} 
&=
\left(
\begin{array}{cc}
\star & 0
\\
\star & \star
\end{array}
\right),
&
n &\in\bbZ_+,
\label{special-Q_{n,n}}
\\
\vc{Q}_{2k-1,2k-2} 
&=
\left(
\begin{array}{cc}
\star & 0
\\
0 & \star
\end{array}
\right),
&
\vc{Q}_{2k,2k-1} 
&=
\left(
\begin{array}{cc}
\star & 0
\\
0 & 0
\end{array}
\right), & k &\in \bbN.
\label{special-Q_{2k-1,2k-2}}
\end{align}
where the symbol ``\, $\star$\, " denotes some nonzero element. In this case, $\vc{Q}$ is irreducible (see Figure~\ref{fig-transition}), but $\bbS_{2k-1}$ is not reachable from state $(2k,2)$ avoiding $\ol{\bbS}_{2k}$. 
\begin{figure*}[htb]
\centering
\includegraphics[scale=0.4]{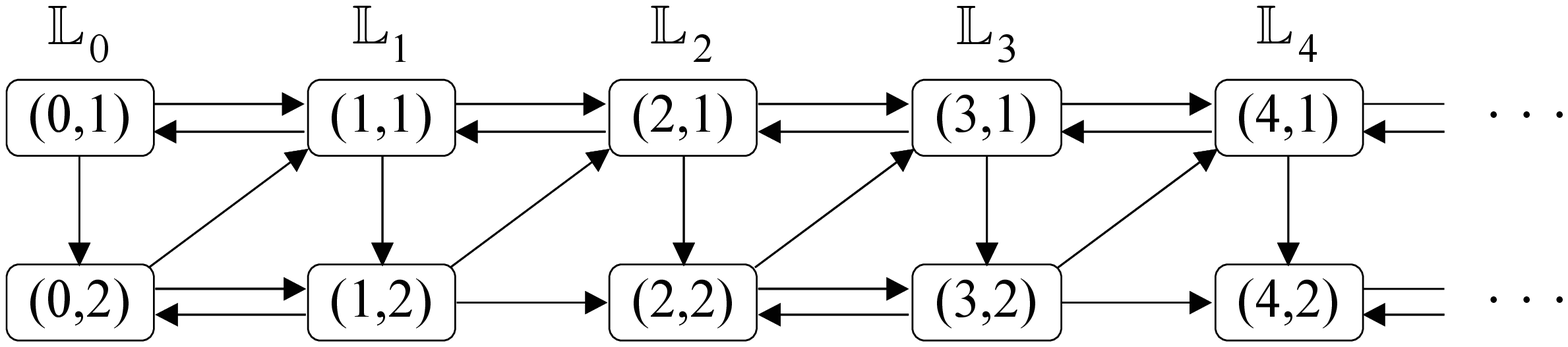}
\caption{Transition diagram}
\label{fig-transition}
\end{figure*}
Thus, the probabilistic interpretations (see Remark~\ref{rem-(n)X_{k,l}}) of the matrices $\vc{U}_{2k}^*=\presub{(2k)}\vc{X}_{2k,2k}$ and $\vc{U}_{2k,\ell}$, $\ell\in\bbZ_{2k-1}$, implies that
\begin{align}
&&&&
\vc{U}_{2k}^*
&= \left(
\begin{array}{cc}
\star & \star
\\
0 & \star
\end{array}
\right),& k &\in \bbN,&&&&
\label{eqn-U_{2k}^*}
\\
&&&&
\vc{U}_{2k,\ell}
&= \left(
\begin{array}{cc}
\star & \star
\\
0 & 0
\end{array}
\right), & k &\in \bbN,~\ell\in\bbZ_{2k-1}.&&&&
\label{eqn-U_{2k,l}}
\end{align}

We now assume that  $\vc{Q}$ is ergodic. We then set
\begin{equation}
\vc{\alpha}_n = (0,1),\qquad n \in \bbZ_+,
\label{special-alpha_n}
\end{equation}
which implies that $\presub{(n)}\wh{\vc{\pi}}$ is the last-column-augmented truncation approximation to the stationary distribution vector $\vc{\pi} > \vc{0}$ of $\vc{Q}$. Applying (\ref{eqn-U_{2k}^*}), (\ref{eqn-U_{2k,l}}), and (\ref{special-alpha_n}) to (\ref{eqn-(n_s)pi_{n_s,l}}) yields
\[
\presub{(2k)}\wh{\vc{\pi}} = (0,\dots,0,1),\qquad k \in \bbN,
\]
which shows that $\{\presub{(n)}\wh{\vc{\pi}};n\in\bbZ_+\}$ does not converge to $\vc{\pi}$ in the present setting.

The example presented here implies that, in some cases, the convergence of $\{\presub{(n)}\wh{\vc{\pi}}\}$ to $\vc{\pi}$ can require an adaptive choice of 
 the augmentation distribution vector $\presub{(n)}\vc{\alpha}$, depending on $n$. We discuss this problem in the next section.


\section{Main results}\label{sec-algorithm}

This section is divided into three subsections. In Section~\ref{subsec-LFP}, we formulate linear fractional programming (LFP) problems for finding augmentation distribution vectors such that the error bound function $E$ converges to zero, i.e., $\lim_{n\to\infty}E(n) = 0$. In Section~\ref{subsec-MIP}, using the optimal solutions of these LFP problems, we construct an MIP form of $\vc{\pi}$. In Section~\ref{subsec-computation}, we present a sequential update algorithm for computing the MIP form.

In this section, we assume that Condition~\ref{cond-01} holds, as in Section~\ref{subsec-error-bound}. We also assume that $n$ takes an arbitrary value in $\bbZ_+$, unless otherwise stated.

\subsection{LFP problems for an MIP form of the stationary distribution vector}\label{subsec-LFP}

Consider the following LFP problem for each $n \in \bbZ_+$:

\smallskip

\begin{subequations}\label{prob-01}
\begin{align}
&&&&
&\mbox{Minimize}     	
&  	
& r_n(\vc{\alpha}_n):=
{\vc{\alpha}_n\vc{y}_n  \over \vc{\alpha}_n\vc{u}_n^*};
\label{defn-r_n}
&&&&
\\
&&&&
&\mbox{Subject to}		
&	
& \vc{\alpha}_n \ge \vc{0},
\label{const-x>=0}
&&&&
\\
&&&& 
&		
&	
& \vc{\alpha}_n\vc{e} = 1,
&&&& \label{const-xe=1}
\end{align}
where $\vc{y}_n:=(y_n(i))_{i\in\bbM_n}$ denotes
\begin{equation}
\vc{y}_n = 
\vc{v}_n 
+ 
\sum_{k=0}^n \vc{U}_{n,k}^*
\sum_{\ell=n+1}^{\infty} \vc{Q}_{k,\ell} \vc{v}_{\ell} > \vc{0}.
\label{defn-y_n}
\end{equation}
\end{subequations}
It follows from (\ref{defn-E(n)}), (\ref{defn-r_n}), and (\ref{defn-y_n}) that
\begin{equation}
E(n)
= 2\left( 
r_n(\vc{\alpha}_n) 
+ { 1 \over \vc{\alpha}_n \vc{u}_n^{\ast} }
{ 2b \over \beta\overline{\phi}_{\bbC}^{(\beta)} } 
\right), \qquad n \in \bbZ_+.
\label{bound-(n)pi-pi}
\end{equation}
Furthermore, let $\vc{\alpha}_n^*:=(\alpha_n^*(j))_{j\in\bbM_n}$ denote a probability vector such that
\begin{equation}
\alpha_n^*(j)
=
\left\{
\begin{array}{l@{~~~}l}
1, & j=j_n^*,
\\
0, & j \neq j_n^*,
\end{array}
\right.
\label{defn-psi_n^*}
\end{equation}
where
\begin{equation}
j_n^* 
\in \argmin_{j\in\bbM_n} {y_n(j)  \over u_n^*(j)}.
\label{defn-j_n^*}
\end{equation}
We then have the following theorem. 
\begin{thm}\label{thm-optima-solution-LFP}
For each $n \in \bbZ_+$, the probability vector $\vc{\alpha}_n^*$ is an optimal solution of the LFP problem~(\ref{prob-01}).
\end{thm}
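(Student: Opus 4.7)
The plan is to recognize the objective $r_n(\vc{\alpha}_n)=\vc{\alpha}_n\vc{y}_n/\vc{\alpha}_n\vc{u}_n^*$ as a weighted average of the per-coordinate ratios $\{y_n(j)/u_n^*(j):j\in\bbM_n\}$ with nonnegative weights. Since a weighted average of a finite set of numbers is bounded below by the smallest of them, and since $\vc{\alpha}_n^*$ places all its mass on the minimizing coordinate $j_n^*$, the infimum is in fact attained by $\vc{\alpha}_n^*$.

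First, I would verify feasibility: by construction (\ref{defn-psi_n^*}), the vector $\vc{\alpha}_n^*$ is nonnegative and $\vc{\alpha}_n^*\vc{e}=1$, so it satisfies (\ref{const-x>=0}) and (\ref{const-xe=1}).

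Next I would establish the key lower bound. The inequality (\ref{ineqn-U_n^*-U_{n,l}e>0}) together with (\ref{defn-u_n^*}) gives $\vc{u}_n^*>\vc{0}$, i.e.\ $u_n^*(j)>0$ for every $j\in\bbM_n$, and (\ref{defn-y_n}) gives $\vc{y}_n>\vc{0}$. For any feasible $\vc{\alpha}_n$, setting $w_j=\alpha_n(j)u_n^*(j)\ge 0$ I can rewrite
\[
r_n(\vc{\alpha}_n)
=\frac{\sum_{j\in\bbM_n}\alpha_n(j)u_n^*(j)\cdot \big(y_n(j)/u_n^*(j)\big)}{\sum_{j\in\bbM_n}\alpha_n(j)u_n^*(j)}
=\frac{\sum_{j\in\bbM_n}w_j\,\big(y_n(j)/u_n^*(j)\big)}{\sum_{j\in\bbM_n}w_j}.
\]
The denominator is strictly positive because $\vc{\alpha}_n\ne\vc{0}$ (thanks to $\vc{\alpha}_n\vc{e}=1$) and $\vc{u}_n^*>\vc{0}$, so this is a genuine convex combination of the ratios $y_n(j)/u_n^*(j)$. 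Consequently,
\[
r_n(\vc{\alpha}_n)\;\ge\;\min_{j\in\bbM_n}\frac{y_n(j)}{u_n^*(j)}
=\frac{y_n(j_n^*)}{u_n^*(j_n^*)},
\]
by the definition (\ref{defn-j_n^*}) of $j_n^*$.

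Finally, I would show the bound is tight at $\vc{\alpha}_n^*$: substituting (\ref{defn-psi_n^*}) into $r_n$ yields directly $r_n(\vc{\alpha}_n^*)=y_n(j_n^*)/u_n^*(j_n^*)$, matching the lower bound above. This proves optimality. There is no genuine obstacle here; the only things to keep track of are the strict positivity of $\vc{u}_n^*$ and $\vc{y}_n$ (so that all divisions are legal and the weighted-average rewriting is valid) and the fact that the constraint $\vc{\alpha}_n\vc{e}=1$ prevents the trivial solution $\vc{\alpha}_n=\vc{0}$.
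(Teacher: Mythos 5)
Your proof is correct and follows essentially the same route as the paper: both arguments reduce to the observation that $r_n(\vc{\alpha}_n)$ is bounded below by $\min_{j\in\bbM_n} y_n(j)/u_n^*(j)$, which is attained by the point mass at $j_n^*$. The paper phrases this as $\vc{y}_n \ge \xi_n \vc{u}_n^* > \vc{0}$ followed by left-multiplication with an arbitrary probability vector, while you phrase it as a convex-combination (weighted average) bound; these are the same inequality in two notations.
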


\begin{proof}
From (\ref{defn-psi_n^*}) and (\ref{defn-j_n^*}), we have
\[
\xi_n := {\vc{\alpha}_n^* \vc{y}_n \over \vc{\alpha}_n^* \vc{u}_n^*}
= {y_n(j_n^*) \over u_n^*(j_n^*)}
= \min_{j\in\bbM_n}{y_n(j) \over u_n^*(j)} > 0,
\]
which leads to $\vc{y}_n \ge \xi_n \vc{u}_n^* > \vc{0}$. Thus, for any $1 \times M_n$ probability vector $\vc{p}_n$, we obtain
\[
{\vc{p}_n\vc{y}_n \over \vc{p}_n\vc{u}_n^*} \ge \xi_n
= {\vc{\alpha}_n^* \vc{y}_n \over \vc{\alpha}_n^* \vc{u}_n^*}.
\]
Therefore, $\vc{\alpha}_n^*$ is an optimal solution of the LFP problem~(\ref{prob-01}).
\qed
\end{proof}

\subsection{An MIP form of the stationary distribution vector}\label{subsec-MIP}

Let $\presub{(n)}\wh{\vc{\pi}}^*
:= (
\presub{(n)}\wh{\vc{\pi}}_{0}^*,
\presub{(n)}\wh{\vc{\pi}}_{1}^*,
\dots,
\presub{(n)}\wh{\vc{\pi}}_{n}^*)$ denote a probability vector such that
\begin{equation}
\presub{(n)}\wh{\vc{\pi}}_{k}^*
= {
\vc{\alpha}_n^*\vc{U}_{n,k}^*
\over
\vc{\alpha}_n^* \vc{u}_n^*
}
= {
{\rm row}\{\vc{U}_{n,k}^* \}_{j_n^*} 
\over  
u_n^*(j_n^*) 
},\qquad k \in \bbZ_n,
\label{defn-(n)wh{pi}_k^*}
\end{equation}
where ${\rm row}\{\,\cdot\,\}_j$ denotes the $j$-th row of the matrix in the brackets. Note here that $\presub{(n)}\wh{\vc{\pi}}_{k}^*$ is equal to $\presub{(n)}\wh{\vc{\pi}}_{k}$ in (\ref{eqn-(n)wh{pi}_k}) with $\vc{\alpha}_n=\vc{\alpha}_n^*$. Therefore, it follows from Theorem~\ref{thm-bound} that
\begin{equation}
\| \presub{(n)}\wh{\vc{\pi}}^* - \vc{\pi} \| 
\le E^*(n),\qquad n \in \bbZ_+,
\label{bound-(n)wh{pi}^*-02}
\end{equation}
where function $E^*$ is equal to $E$ given in (\ref{bound-(n)pi-pi}) with $\vc{\alpha}_n=\vc{\alpha}_n^*$; that is,
\begin{eqnarray}
E^*(n)
&=&
2\left(
r_n( \vc{\alpha}_n^*)
+ 
{ 1 \over \vc{\alpha}_n^* \vc{u}_n^{\ast} }
{ 2b \over \beta\overline{\phi}_{\bbC}^{(\beta)} }
\right), \qquad n \in \bbZ_+.
\label{defn-E^*(n)}
\end{eqnarray}

To proceed further, we assume the following.
\begin{cond}\label{cond-02}
\begin{eqnarray}
\sum_{n=0}^{\infty} \vc{\pi}_n \vc{\Delta}_n \vc{v}_n < \infty,
\label{lim-pi_n-Delta_n-v_n}
\end{eqnarray}
where $\vc{\Delta}_n:=(\Delta_n(i,j))_{i,j\in\bbM_n}$ denotes an $M_n \times M_n$ diagonal matrix such that 
\begin{equation}
\Delta_n(i,i) = |q(n,i;n,i)|,\qquad i \in \bbM_n.
\label{defn-Delta_n}
\end{equation}
\end{cond}

\begin{lem}\label{lem-r^{(1)}}
Suppose that Conditions~\ref{cond-01} and \ref{cond-02} hold. We then have
\begin{equation}
\lim_{n\to\infty} r_n(\vc{\alpha}_n^*) 
= \lim_{n\to\infty} {\vc{\alpha}_n^*\vc{y}_n \over \vc{\alpha}_n^*\vc{u}_n^*}
= 0.
\label{lim-r_n(psi_n^*)}
\end{equation}
\end{lem}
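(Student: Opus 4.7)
My plan is to leverage the minimality property from Theorem~\ref{thm-optima-solution-LFP}: since $r_n(\vc{\alpha}_n^*) = \min_{j \in \bbM_n} y_n(j)/u_n^*(j)$, for any nonnegative row vector $\vc{p}_n$ of length $M_n$ with $\vc{p}_n\vc{u}_n^* > 0$ one has
\[
r_n(\vc{\alpha}_n^*) \le \frac{\vc{p}_n\vc{y}_n}{\vc{p}_n\vc{u}_n^*},
\]
so it suffices to exhibit one such sequence $\{\vc{p}_n\}$ for which the right-hand side vanishes. My choice is $\vc{p}_n$ proportional to $\vc{\pi}_{n+1}\vc{Q}_{n+1,n}$: this is nonnegative, and by ergodicity combined with the upper block-Hessenberg structure (the sole one-step route from $\ol{\bbS}_n$ into $\bbS_n$ passes through $\vc{Q}_{n+1,n}$), the row sum $\vc{\pi}_{n+1}\vc{Q}_{n+1,n}\vc{e}$ is positive.

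The key algebraic lemma I need is the identity
\[
\vc{\pi}_n = \vc{\pi}_{n+1}\vc{Q}_{n+1,n}\vc{U}_n^*,
\]
which follows from taking the $n$-th block-column of $\vc{\pi}\vc{Q}=\vc{0}$, substituting $\vc{\pi}_k = \vc{\pi}_n\vc{U}_{n,k}$ from (\ref{eqn-pi_l-pi_k*U_{k,l}}) for $k \le n-1$, and recognising the remaining bracketed factor as $\vc{U}_n^{*-1}$ from (\ref{defn-U_k^*}). Combining this with the factorisations $\vc{u}_n^* = \vc{U}_n^*\sum_{\ell=0}^n\vc{U}_{n,\ell}\vc{e}$ and $\vc{y}_n = \vc{v}_n + \vc{U}_n^*\sum_{k=0}^n\vc{U}_{n,k}\sum_{\ell=n+1}^{\infty}\vc{Q}_{k,\ell}\vc{v}_\ell$, and applying $\vc{\pi}_n\vc{U}_{n,\ell} = \vc{\pi}_\ell$ once more to the resulting $\vc{\pi}_n\sum_{\ell=0}^n\vc{U}_{n,\ell}\vc{e}$, the ratio telescopes to
\[
\frac{\vc{p}_n\vc{y}_n}{\vc{p}_n\vc{u}_n^*}
= \frac{\vc{\pi}_{n+1}\vc{Q}_{n+1,n}\vc{v}_n + \sum_{k=0}^n\vc{\pi}_k\sum_{\ell=n+1}^{\infty}\vc{Q}_{k,\ell}\vc{v}_\ell}{\sum_{\ell=0}^n\vc{\pi}_\ell\vc{e}}.
\]
The denominator equals $\vc{\pi}(\bbS_n) \to 1$, so the task reduces to proving that the numerator tends to zero.

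For the numerator I will use the stationary inflow identity $\sum_{(k,i)\neq(\ell,j)}\pi(k,i)q(k,i;\ell,j) = \pi(\ell,j)\Delta_\ell(j,j)$, which is the $(\ell,j)$-column equation of $\vc{\pi}\vc{Q}=\vc{0}$. Bounding the partial inflow from any subset of senders by the total inflow $\pi(\ell,j)\Delta_\ell(j,j)$ yields
\[
\vc{\pi}_{n+1}\vc{Q}_{n+1,n}\vc{v}_n \le \vc{\pi}_n\vc{\Delta}_n\vc{v}_n,
\qquad
\sum_{k=0}^n\vc{\pi}_k\sum_{\ell=n+1}^{\infty}\vc{Q}_{k,\ell}\vc{v}_\ell \le \sum_{\ell=n+1}^{\infty}\vc{\pi}_\ell\vc{\Delta}_\ell\vc{v}_\ell,
\]
where the first applies to receivers $(n,j)$ and the second to receivers $(\ell,j)$ with $\ell > n$. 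By Condition~\ref{cond-02} the series $\sum_n\vc{\pi}_n\vc{\Delta}_n\vc{v}_n$ converges, so its general term and its tail both vanish as $n \to \infty$, and (\ref{lim-r_n(psi_n^*)}) follows. The main obstacle I anticipate is precisely finding this test vector: the identity $\vc{\pi}_n = \vc{\pi}_{n+1}\vc{Q}_{n+1,n}\vc{U}_n^*$ is what causes the awkward factor $\vc{U}_n^*$ to collapse simultaneously in $\vc{y}_n$ and $\vc{u}_n^*$, and it is what links the remaining cross-level flows to the finite quantity controlled by Condition~\ref{cond-02}; without this particular choice of $\vc{p}_n$ there seems to be no natural handle on $\vc{\pi}_n\vc{U}_n^*$.
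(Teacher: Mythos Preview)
Your proof is correct and, at its core, uses the same test vector as the paper: the paper's choice $\wt{\vc{\alpha}}_n \propto \vc{\pi}_n(\vc{U}_n^*)^{-1}$ coincides with your $\vc{p}_n \propto \vc{\pi}_{n+1}\vc{Q}_{n+1,n}$, and this identity is exactly what is established in Appendix~\ref{appen-ineqn-pi-(U_n^*)^{-1}} (the proof of Proposition~\ref{lem-pi_n-Lambda_n}). Writing the vector your way makes nonnegativity immediate, so Proposition~\ref{lem-pi_n-Lambda_n} is absorbed into your argument rather than invoked as a separate step. The one substantive difference is in handling the term $\sum_{k=0}^n\vc{\pi}_k\sum_{\ell>n}\vc{Q}_{k,\ell}\vc{v}_\ell$: the paper bounds each inner sum via the drift inequality~(\ref{ineqn-QV<=-f+b1_C}) to obtain a dominating summand $\vc{\pi}_k\vc{\Delta}_k\vc{v}_k + b\vc{\pi}_k\vc{e}$ and then applies dominated convergence, whereas your stationary-inflow bound gives directly $\sum_{k\le n}\vc{\pi}_k\sum_{\ell>n}\vc{Q}_{k,\ell}\vc{v}_\ell \le \sum_{\ell>n}\vc{\pi}_\ell\vc{\Delta}_\ell\vc{v}_\ell$, which is the tail of the convergent series in Condition~\ref{cond-02}. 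This is a genuine simplification: Condition~\ref{cond-01} plays no role in your limit argument beyond guaranteeing ergodicity and the finiteness of $\vc{y}_n$.
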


\begin{rem}\label{rem-Q-exp-bounded}
If $\vc{Q}$ is bounded, i.e., $\sup_{(n,i)\in\bbS}\Delta_n(i,i) < \infty$,
then Condition~\ref{cond-02} is reduced to
\begin{equation*}
\vc{\pi} \vc{v} = \sum_{n=0}^{\infty} \vc{\pi}_n \vc{v}_n < \infty.
\end{equation*}
\end{rem}

\noindent
{\it Proof of Lemma~\ref{lem-r^{(1)}}~} 
To prove this lemma, we require the following proposition (which is proved in Appendix~\ref{appen-ineqn-pi-(U_n^*)^{-1}}).
\begin{prop}\label{lem-pi_n-Lambda_n}
Under Condition~\ref{cond-01},
\begin{equation}
\vc{\pi}_n \vc{\Delta}_n
\ge
\vc{\pi}_n (\vc{U}_n^*)^{-1} 
\ge \vc{0}, \neq \vc{0}
\quad\mbox{for all $n \in \bbZ_+$}.
\label{ineqn-pi-(U_n^*)^{-1}}
\end{equation}
\end{prop}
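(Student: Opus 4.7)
The plan is to express $(\vc{U}_n^*)^{-1}$ explicitly via (\ref{defn-U_k^*}) and then exploit the global balance $\vc{\pi}\vc{Q}=\vc{0}$ at level $n$. Starting from $(\vc{U}_n^*)^{-1} = -\vc{Q}_{n,n} - \sum_{\ell=0}^{n-1} \vc{U}_{n,\ell}\vc{Q}_{\ell,n}$ (with the convention that the empty sum handles $n=0$), I left-multiply by $\vc{\pi}_n$ and invoke the identity $\vc{\pi}_\ell = \vc{\pi}_n \vc{U}_{n,\ell}$ from (\ref{eqn-pi_l-pi_k*U_{k,l}}) to obtain
\[
\vc{\pi}_n (\vc{U}_n^*)^{-1}
= -\vc{\pi}_n \vc{Q}_{n,n} - \sum_{\ell=0}^{n-1} \vc{\pi}_\ell \vc{Q}_{\ell,n}.
\]
Because $\vc{Q}$ is upper block-Hessenberg, $\vc{Q}_{k,n} = \vc{O}$ for $k > n+1$, so the stationarity condition $\vc{\pi}\vc{Q}=\vc{0}$ restricted to $\bbL_n$ reads $\sum_{k=0}^{n+1}\vc{\pi}_k\vc{Q}_{k,n}=\vc{0}$. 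Substituting this yields the key identity
\[
\vc{\pi}_n (\vc{U}_n^*)^{-1} = \vc{\pi}_{n+1}\vc{Q}_{n+1,n},
\]
which is componentwise nonnegative since $\vc{Q}_{n+1,n}\ge\vc{O}$ (off-diagonal block of a $Q$-matrix) and $\vc{\pi}_{n+1}>\vc{0}$. Nonvanishing then follows either directly from $\vc{\pi}_n>\vc{0}$ combined with the invertibility of $\vc{U}_n^*$, or equivalently from the fact that irreducibility of $\vc{Q}$ forces $\vc{Q}_{n+1,n}\neq\vc{O}$ (otherwise $\bbL_n$ would be unreachable from higher levels, contradicting $\vc{\pi}_n>\vc{0}$).

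For the upper bound, I would inspect the identity entrywise. The $j$-th component of $-\vc{\pi}_n\vc{Q}_{n,n}$ splits into the diagonal contribution $\pi(n,j)|q(n,j;n,j)| = [\vc{\pi}_n\vc{\Delta}_n]_j$ and the nonpositive remainder $-\sum_{i\neq j}\pi(n,i)q(n,i;n,j)\le 0$, because off-diagonal elements of $\vc{Q}_{n,n}$ are nonnegative. The remaining term $-\sum_{\ell=0}^{n-1}\vc{\pi}_\ell\vc{Q}_{\ell,n}$ is also coordinatewise $\le\vc{0}$ since $\vc{Q}_{\ell,n}\ge\vc{O}$ for $\ell\neq n$ and $\vc{\pi}_\ell>\vc{0}$. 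Summing yields $[\vc{\pi}_n(\vc{U}_n^*)^{-1}]_j \le [\vc{\pi}_n\vc{\Delta}_n]_j$, which is the claimed inequality.

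I do not anticipate a serious obstacle here; the argument is essentially a rearrangement of the level-$n$ balance equation combined with the algebraic definition of $\vc{U}_n^*$. The only delicate points are (i) treating $n=0$ uniformly via the empty-sum convention already adopted in the paper, and (ii) ensuring that the strictness $\neq \vc{0}$ is justified cleanly; I would use the invertibility route for brevity since $\vc{\pi}_n>\vc{0}$ is already given and $\vc{U}_n^*$ is well defined as an inverse by (\ref{defn-U_k^*}).
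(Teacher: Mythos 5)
Your argument is correct and is essentially the paper's own proof: the paper introduces $\vc{T}_n^* = -(\vc{U}_n^*)^{-1}$, chains the matrix inequalities $\vc{\pi}_n\vc{\Delta}_n \ge \vc{\pi}_n(-\vc{Q}_{n,n}) \ge \vc{\pi}_n(-\vc{T}_n^*)$, and uses $\vc{\pi}_\ell = \vc{\pi}_n\vc{U}_{n,\ell}$ together with the level-$n$ balance equation to get $\vc{\pi}_n(\vc{U}_n^*)^{-1} = \sum_{\ell=n+1}^{\infty}\vc{\pi}_\ell\vc{Q}_{\ell,n} \ge \vc{0}, \neq \vc{0}$, which is exactly your $\vc{\pi}_{n+1}\vc{Q}_{n+1,n}$ after noting the Hessenberg structure. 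Your entrywise phrasing of the upper bound and your invertibility argument for nonvanishing are only cosmetic variations on the same steps.
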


Let $\wt{\vc{\alpha}}_n$ denote 
\begin{equation}
\wt{\vc{\alpha}}_n
= {
\vc{\pi}_n (\vc{U}_n^*)^{-1} 
\over 
\vc{\pi}_n (\vc{U}_n^*)^{-1} \vc{e}}\ge \vc{0}, \neq \vc{0},
\label{fix-x-alpha_n}
\end{equation}
which is well-defined due to Proposition~\ref{lem-pi_n-Lambda_n}. Note that $\wt{\vc{\alpha}}_n$ is a feasible solution of the LFP problem~(\ref{prob-01}). Thus, by the optimality of $\vc{\alpha}_n^*$, we have
\begin{equation*}
r_n(\vc{\alpha}_n^*) 
\le r_n(\wt{\vc{\alpha}}_n)\quad \mbox{for all $n \in \bbZ_+$}.
\end{equation*}
It follows from (\ref{defn-r_n}) and (\ref{fix-x-alpha_n}) that
\begin{eqnarray}
r_n(\wt{\vc{\alpha}}_n)
&=& {
\vc{\pi}_n (\vc{U}_n^*)^{-1} \vc{y}_n 
\over 
\vc{\pi}_n (\vc{U}_n^*)^{-1} \vc{u}_n^*}
\nonumber\\
&=&
{1 \over \vc{\pi}_n (\vc{U}_n^*)^{-1} \vc{u}_n^*}
\nonumber
\\
&& \times 
\left(
\vc{\pi}_n (\vc{U}_n^*)^{-1}\vc{v}_n  + 
\vc{\pi}_n (\vc{U}_n^*)^{-1}
\sum_{k=0}^n \vc{U}_{n,k}^*
\sum_{\ell=n+1}^{\infty} \vc{Q}_{k,\ell} \vc{v}_{\ell}
\right),\qquad
\label{eqn-180326-01}
\end{eqnarray}
where the second equality holds due to (\ref{defn-y_n}). It also follows from (\ref{eqn-pi_l-pi_k*U_{k,l}}), (\ref{defn-U_{n,k}^*}), and (\ref{defn-u_n^*}) that
\begin{eqnarray*}
\vc{\pi}_n (\vc{U}_n^*)^{-1} \vc{U}_{n,k}^*
&=& \vc{\pi}_n \vc{U}_{n,k}
= \vc{\pi}_k,\qquad k \in \bbZ_n,
\\
\vc{\pi}_n (\vc{U}_n^*)^{-1} \vc{u}_n^*
&=&  \sum_{\ell=0}^n  
\left\{ 
\vc{\pi}_n (\vc{U}_n^*)^{-1} \vc{U}_{n,\ell}^*
\right\}\vc{e}
= \sum_{\ell=0}^n \vc{\pi}_{\ell}\vc{e}.
\end{eqnarray*}
Substituting these equations into (\ref{eqn-180326-01}), and using (\ref{ineqn-pi-(U_n^*)^{-1}}), we obtain 
\begin{eqnarray}
r_n(\wt{\vc{\alpha}}_n)
&=& 
{
1
\over 
\sum_{\ell=0}^n \vc{\pi}_{\ell}  \vc{e}
}
\left(
\vc{\pi}_n (\vc{U}_n^*)^{-1}\vc{v}_n 
+ 
\sum_{k=0}^n \vc{\pi}_k
\sum_{\ell=n+1}^{\infty} \vc{Q}_{k,\ell} \vc{v}_{\ell}
\right)
\nonumber
\\
&\le&
{
1
\over 
\sum_{\ell=0}^n \vc{\pi}_{\ell}  \vc{e}
}
\left(
\vc{\pi}_n \vc{\Delta}_n \vc{v}_n 
+ 
\sum_{k=0}^n \vc{\pi}_k
\sum_{\ell=n+1}^{\infty} \vc{Q}_{k,\ell} \vc{v}_{\ell}
\right).
\label{ineqn-r_n(psi_n^*)}
\end{eqnarray}
Consequently, the proof of (\ref{lim-r_n(psi_n^*)}) is completed by showing that
the right-hand side of (\ref{ineqn-r_n(psi_n^*)}) converges to zero as $n\to\infty$.

It follows from (\ref{ineqn-QV<=-f+b1_C}) that, for all $n \in \bbZ_+$ and $k \in \bbZ_n$,
\begin{eqnarray*}
\vc{0}
\le
\sum_{\ell=n+1}^{\infty} \vc{Q}_{k,\ell} \vc{v}_{\ell}
&\le& - \sum_{\ell=0}^n \vc{Q}_{k,\ell} \vc{v}_{\ell} - \vc{e} + b\vc{e}
\nonumber
\\
&\le&  - \vc{Q}_{k,k} \vc{v}_k + b\vc{e}
\le \vc{\Delta}_k \vc{v}_k + b\vc{e},
\end{eqnarray*}
and thus
\begin{equation*}
\sum_{k=0}^n \vc{\pi}_k
\sum_{\ell=n+1}^{\infty} \vc{Q}_{k,\ell} \vc{v}_{\ell}
\le \sum_{k=0}^{\infty} \vc{\pi}_k\vc{\Delta}_k \vc{v}_k + b< \infty
\quad \mbox{for all $n \in \bbZ_+$},
\end{equation*}
where the last inequality holds due to (\ref{lim-pi_n-Delta_n-v_n}). Therefore, by the dominated convergence theorem,
\begin{eqnarray}
\lim_{n\to\infty} 
\sum_{k=0}^n \vc{\pi}_k
\sum_{\ell=n+1}^{\infty} \vc{Q}_{k,\ell} \vc{v}_{\ell}
= 
\sum_{k=0}^{\infty} \vc{\pi}_k
\lim_{n\to\infty} 
\sum_{\ell=n+1}^{\infty} \vc{Q}_{k,\ell} \vc{v}_{\ell}
= \vc{0}.
\label{eqn-180318-02}
\end{eqnarray}
It also follows from (\ref{lim-pi_n-Delta_n-v_n}) that
\begin{equation}
\lim_{n\to\infty} \vc{\pi}_n \vc{\Delta}_n \vc{v}_n = 0.
\label{eqn-180318-03}
\end{equation}
Combining (\ref{eqn-180318-02}), (\ref{eqn-180318-03}), and $\sum_{\ell=0}^{\infty} \vc{\pi}_{\ell} \vc{e}=1$, we obtain
\[
\lim_{n\to\infty}
{1 \over 
\sum_{\ell=0}^n \vc{\pi}_{\ell}  \vc{e}
}
\left(
\vc{\pi}_n \vc{\Delta}_n \vc{v}_n 
+ 
\sum_{k=0}^n \vc{\pi}_k
\sum_{\ell=n+1}^{\infty} \vc{Q}_{k,\ell} \vc{v}_{\ell}
\right)
=0,
\]
which completes the proof. \qed

\medskip

The following theorem is a consequence of Lemma~\ref{lem-r^{(1)}} together with (\ref{bound-(n)wh{pi}^*-02}) and (\ref{defn-E^*(n)}).
\begin{thm}\label{thm-E^*-01}
Suppose that Conditions~\ref{cond-01} and \ref{cond-02} hold. We then have
\begin{equation}
\lim_{n\to\infty} E^*(n) = 0,
\label{convergence-(n)ol{pi}_n^*}
\end{equation}
and thus (\ref{bound-(n)wh{pi}^*-02}) yields
\begin{eqnarray}
\lim_{n\to\infty} \| \presub{(n)}\wh{\vc{\pi}}^* - \vc{\pi} \|  &=& 0.
\label{convergence-(n)wh{pi}^*}
\end{eqnarray}
\end{thm}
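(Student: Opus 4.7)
The plan is to combine the explicit formula (\ref{defn-E^*(n)}) for $E^*(n)$ with Lemma~\ref{lem-r^{(1)}}, showing that both summands in (\ref{defn-E^*(n)}) vanish as $n \to \infty$. The first summand $r_n(\vc{\alpha}_n^*)$ tends to zero immediately by Lemma~\ref{lem-r^{(1)}}. The challenge is the second summand $\frac{2b/(\beta\overline{\phi}_{\bbC}^{(\beta)})}{\vc{\alpha}_n^* \vc{u}_n^*}$, for which I must show that $\vc{\alpha}_n^* \vc{u}_n^* \to \infty$.

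To establish divergence of $\vc{\alpha}_n^* \vc{u}_n^*$, I would observe that the vector $\vc{y}_n$ defined in (\ref{defn-y_n}) admits the uniform lower bound $\vc{y}_n \ge \vc{v}_n$. This holds because $\vc{U}_{n,k}^* = \vc{U}_n^* \vc{U}_{n,k} \ge \vc{O}$ (both factors are non-negative, as is clear from the probabilistic interpretation recorded in Remark~\ref{rem-(n)X_{k,l}}, or directly from (\ref{defn-U_{k,l}}) together with $(-\presub{(n)}\vc{Q})^{-1} \ge \vc{O}$) and the off-diagonal block rates $\vc{Q}_{k,\ell}$ (for $k \le n < \ell$) are non-negative. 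Condition~\ref{cond-01} further ensures the uniform positivity $\vc{v}_n \ge c\,\vc{e}$ with $c := \inf_{(k,i)\in\bbS} v(k,i) > 0$. Hence $\vc{\alpha}_n^* \vc{y}_n \ge c$ for every $n$, and rewriting $\vc{\alpha}_n^* \vc{u}_n^* = \vc{\alpha}_n^* \vc{y}_n / r_n(\vc{\alpha}_n^*)$ yields $\vc{\alpha}_n^* \vc{u}_n^* \ge c / r_n(\vc{\alpha}_n^*) \to \infty$ by Lemma~\ref{lem-r^{(1)}}. Substituting back into (\ref{defn-E^*(n)}) gives (\ref{convergence-(n)ol{pi}_n^*}), and then (\ref{convergence-(n)wh{pi}^*}) follows at once from the error bound (\ref{bound-(n)wh{pi}^*-02}) applied with any fixed $\beta > 0$.

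The main subtlety is that Lemma~\ref{lem-r^{(1)}} controls only the ratio $r_n(\vc{\alpha}_n^*)$, not its numerator and denominator separately, so the divergence of $\vc{\alpha}_n^* \vc{u}_n^*$ must be extracted indirectly. The uniform positive lower bound on $\vc{v}$ supplied by Condition~\ref{cond-01} is precisely what rules out the pathological scenario in which $\vc{\alpha}_n^* \vc{y}_n$ and $\vc{\alpha}_n^* \vc{u}_n^*$ shrink together at comparable rates, which would otherwise keep the second summand of $E^*(n)$ bounded away from zero.
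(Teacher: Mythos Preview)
Your proposal is correct and follows essentially the same route as the paper: both arguments use Lemma~\ref{lem-r^{(1)}} for the first summand in (\ref{defn-E^*(n)}), bound $\vc{\alpha}_n^*\vc{y}_n \ge \vc{\alpha}_n^*\vc{v}_n \ge \inf_{(k,i)\in\bbS}v(k,i) > 0$ via Condition~\ref{cond-01}, and then infer $\vc{\alpha}_n^*\vc{u}_n^* = \vc{\alpha}_n^*\vc{y}_n / r_n(\vc{\alpha}_n^*) \to \infty$ to kill the second summand. Your write-up simply makes the uniform lower bound $c$ and the non-negativity of $\vc{U}_{n,k}^*$ more explicit than the paper does.
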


\begin{proof}
We prove only (\ref{convergence-(n)ol{pi}_n^*}).
It follows from (\ref{defn-y_n}), (\ref{defn-psi_n^*}), and $\inf_{(k,i)\in\bbS}v(k,i) > 0$ (see Condition~\ref{cond-01}) that
\[
\vc{\alpha}_n^* \vc{y}_n \ge \vc{\alpha}_n^* \vc{v}_n = v(n,j_n^*) > 0,
\qquad n \in \bbZ_+.
\]
Therefore, (\ref{lim-r_n(psi_n^*)}) implies that
\[
\lim_{n\to\infty} \vc{\alpha}_n^* \vc{u}_n^* = \infty,
\]
which yields 
\begin{equation}
\lim_{n\to\infty} 
{1 \over \vc{\alpha}_n^* \vc{u}_n^* }
{2b  \over \beta \ol{\phi}_{\bbC}^{(\beta)} }
= 0.
\label{eqn-180318-04}
\end{equation}
Applying (\ref{eqn-180318-04}) and Lemma~\ref{lem-r^{(1)}} to (\ref{defn-E^*(n)}) results in  (\ref{convergence-(n)ol{pi}_n^*}).
\qed
\end{proof}

\medskip

Theorem~\ref{thm-E^*-01} yields a {\it matrix-infinite-product (MIP) form} of $\vc{\pi}=(\vc{\pi}_0,\vc{\pi}_1,\dots)$ under Conditions~\ref{cond-01} and \ref{cond-02}. This is summarized in the following corollary.
\begin{coro}\label{coro-MIP-form-solution}
If Conditions~\ref{cond-01} and \ref{cond-02} hold, then
\begin{equation}
\vc{\pi}_k
= \lim_{n\to\infty}
{ 
\vc{\alpha}_n^* \vc{U}_{n,k}^*
\over 
\vc{\alpha}_n^* \vc{u}_n^*
},
\qquad k \in \bbZ_+,
\label{limit-form-pi_k}
\end{equation}
or equivalently,
\begin{equation}
\vc{\pi}_k
= \lim_{n\to\infty}
{ 
\vc{\alpha}_n^* \vc{U}_n^* \vc{U}_{n-1}\vc{U}_{n-2} \cdots \vc{U}_k
\over 
\vc{\alpha}_n^* 
\sum_{\ell=0}^n \vc{U}_n^* \vc{U}_{n-1}\vc{U}_{n-2} \cdots \vc{U}_{\ell}\vc{e}
},
\qquad k \in \bbZ_+,
\label{MIP-form-solution}
\end{equation}
where
\begin{equation}
\vc{U}_k = \vc{Q}_{k+1,k}\vc{U}_k^*,\qquad k \in \bbZ_+.
\label{defn-U_k}
\end{equation}
\end{coro}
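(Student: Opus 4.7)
The plan is to obtain Corollary \ref{coro-MIP-form-solution} as an essentially direct consequence of Theorem \ref{thm-E^*-01} together with the matrix-product representations already developed in Lemma \ref{lem-(n_s)X_{s,l}} and equation (\ref{defn-(n)wh{pi}_k^*}); there is no substantial new analytical content, only a careful bookkeeping of the formulas.

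First I would prove the compact identity (\ref{limit-form-pi_k}). Under Conditions~\ref{cond-01} and \ref{cond-02}, Theorem~\ref{thm-E^*-01} gives $\lim_{n\to\infty}\|\presub{(n)}\wh{\vc{\pi}}^* - \vc{\pi}\| = 0$. Since total variation convergence of probability vectors indexed on $\bbS$ implies element-wise convergence of each finite block-row, for every fixed $k \in \bbZ_+$ we get $\lim_{n\to\infty}\presub{(n)}\wh{\vc{\pi}}_k^* = \vc{\pi}_k$ (note that for each fixed $k$, the block $\presub{(n)}\wh{\vc{\pi}}_k^*$ is defined for all sufficiently large $n$, specifically $n \ge k$). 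Combining this with the explicit expression (\ref{defn-(n)wh{pi}_k^*}),
\[
\presub{(n)}\wh{\vc{\pi}}_k^* = \frac{\vc{\alpha}_n^* \vc{U}_{n,k}^*}{\vc{\alpha}_n^* \vc{u}_n^*},
\]
immediately yields (\ref{limit-form-pi_k}).

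Next I would verify that (\ref{limit-form-pi_k}) and (\ref{MIP-form-solution}) are equivalent by unfolding the matrix products. From (\ref{defn-U_{k,l}}) in Lemma~\ref{lem-(n_s)X_{s,l}}, for $k \in \bbZ_{n-1}$,
\[
\vc{U}_{n,k} = (\vc{Q}_{n,n-1}\vc{U}_{n-1}^*)(\vc{Q}_{n-1,n-2}\vc{U}_{n-2}^*)\cdots(\vc{Q}_{k+1,k}\vc{U}_k^*),
\]
and $\vc{U}_{n,n} = \vc{I}$. Substituting the definition $\vc{U}_k = \vc{Q}_{k+1,k}\vc{U}_k^*$ from (\ref{defn-U_k}) and then multiplying on the left by $\vc{U}_n^*$ in accordance with (\ref{defn-U_{n,k}^*}), we obtain
\[
\vc{U}_{n,k}^* = \vc{U}_n^* \vc{U}_{n-1}\vc{U}_{n-2}\cdots \vc{U}_k, \qquad k \in \bbZ_{n-1},
\]
with the usual convention that $\vc{U}_{n,n}^* = \vc{U}_n^*$ (the empty product equals $\vc{I}$). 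Using this inside the defining sum (\ref{defn-u_n^*}) gives
\[
\vc{u}_n^* = \sum_{\ell=0}^n \vc{U}_n^* \vc{U}_{n-1}\vc{U}_{n-2}\cdots \vc{U}_{\ell}\vc{e}.
\]
Plugging both identities into (\ref{limit-form-pi_k}) produces precisely (\ref{MIP-form-solution}).

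There is really no hard step here; the only mild care required is keeping track of the two index conventions (the cases $k = n$ versus $k < n$, which would be handled by the empty-product convention introduced at the end of Section~\ref{sec-MIP-form-solution}) and noting that each block $\presub{(n)}\wh{\vc{\pi}}_k^*$ is a fixed finite-dimensional slice of $\presub{(n)}\wh{\vc{\pi}}^*$ for all $n \ge k$, so that the total-variation convergence of Theorem~\ref{thm-E^*-01} descends to block-wise convergence without any additional argument.
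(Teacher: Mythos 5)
Your proposal is correct and follows essentially the same route as the paper: deduce \eqref{limit-form-pi_k} from Theorem~\ref{thm-E^*-01} via \eqref{defn-(n)wh{pi}_k^*}, then unfold $\vc{U}_{n,k}^*$ using \eqref{defn-U_{n,k}^*}, \eqref{defn-U_{k,l}}, and \eqref{defn-U_k} to obtain \eqref{MIP-form-solution}. The only difference is cosmetic: you spell out the (easy) passage from total-variation convergence to block-wise convergence, which the paper leaves implicit.
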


\begin{proof}
Suppose that Conditions~\ref{cond-01} and \ref{cond-02} hold.
It then follows from (\ref{defn-(n)wh{pi}_k^*}) and (\ref{convergence-(n)wh{pi}^*}) that
\[
\vc{\pi}_k
= \lim_{n\to\infty} \presub{(n)}\wh{\vc{\pi}}_k^*
= \lim_{n\to\infty} {\vc{\alpha}_n^*\vc{U}_{n,k}^* \over \vc{\alpha}_n^*\vc{u}_n^*},\qquad k\in\bbZ_+,
\]
which shows that (\ref{limit-form-pi_k}) holds. Furthermore, combining (\ref{defn-U_{n,k}^*}) with (\ref{defn-U_{k,l}}) and (\ref{defn-U_k}) yields, for $n \in \bbZ_+$,
\[
\vc{U}_{n,k}^* = 
\left\{
\begin{array}{ll}
\vc{U}_n^*\vc{U}_{n-1}\vc{U}_{n-2} \cdots \vc{U}_k, & k\in\bbZ_{n-1},
\\
\vc{I},& k=n.
\end{array}
\right.
\]
Using this and (\ref{defn-u_n^*}), we can rewrite (\ref{limit-form-pi_k}) as (\ref{MIP-form-solution}).\qed
\end{proof}

\begin{rem}
Theorem~\ref{thm-E^*-01} ensures that the convergence in (\ref{limit-form-pi_k}) and (\ref{MIP-form-solution}) is uniform for $k \in \bbZ_+$.
\end{rem}

\begin{rem}
Another MIP form of $\vc{\pi}_k$ is presented in the preprint \cite{Masu16-arXiv:1603}, under some technical conditions different from Conditions~\ref{cond-01} and \ref{cond-02}.
\end{rem}

\subsection{A sequential update algorithm for the MIP form}\label{subsec-computation}

In this subsection, we propose an algorithm for computing $\vc{\pi}$, based on Theorem~\ref{thm-E^*-01} and Corollary~\ref{coro-MIP-form-solution}. Our algorithm sequentially updates the LBCL-augmented truncation approximation so that it converges to the MIP form (\ref{limit-form-pi_k}) of $\vc{\pi}$. 

To efficiently achieve this update procedure, we derive recursive formulas. 
Combining (\ref{defn-U_{n,k}^*}) with
(\ref{defn-U_k^*}) and (\ref{defn-U_{k,l}}), we have
\begin{subequations}\label{recursion-U_{n,k}^*}
\begin{eqnarray}
\vc{U}_{0,0}^*
&=& \vc{U}_0^* = (-\vc{Q}_{0,0})^{-1},
\label{eqn-U_{0,0}^*}
\\
\vc{U}_{n,k}^*
&=& 
\left\{
\begin{array}{l@{~~~}ll}
\vc{U}_n^* \vc{Q}_{n,n-1} \cdot \vc{U}^*_{n-1,k},
& n \in \bbN,\ & k \in \bbZ_{n-1},
\\
\vc{U}_n^*,
& n \in \bbN,\ & k = n.
\end{array}
\right.
\label{eqn-U_{n,k}^*}
\end{eqnarray}
\end{subequations}
Using (\ref{defn-u_n^*}), (\ref{eqn-U_{0,0}^*}), and
(\ref{eqn-U_{n,k}^*}), we also obtain
\begin{subequations}\label{recursion-u_n^*}
\begin{eqnarray}
\vc{u}^*_0
&=& \vc{U}_0^* \vc{e} = (-\vc{Q}_{0,0})^{-1}\vc{e},
\label{eqn-u_0^*}
\\
\vc{u}^*_n
&=& \vc{U}_n^* 
\left( \vc{e} + \vc{Q}_{n,n-1} \vc{u}^*_{n-1}
\right),
\qquad n \in \bbN.
\label{eqn-u_n^*}
\end{eqnarray}
\end{subequations}
Furthermore, (\ref{defn-U_{k,l}}) and (\ref{defn-U_{n,k}^*}) yield
\[
\vc{U}_{n,\ell} 
= \vc{Q}_{n,n-1}\vc{U}_{n-1}^*\vc{U}_{n-1,\ell}
= \vc{Q}_{n,n-1}\vc{U}_{n-1,\ell}^*,\qquad \ell \in \bbZ_{n-1}.
\]
Substituting this into (\ref{defn-U_k^*}) leads to
\begin{eqnarray}
\vc{U}_n^*
&=& 
\left(
- \vc{Q}_{n,n} 
- \vc{Q}_{n,n-1} \sum_{\ell=0}^{n-1} \vc{U}_{n-1,\ell}^*\vc{Q}_{\ell,n}
\right)^{-1},
\qquad n \in \bbN.
\label{eqn-U_n^*}
\end{eqnarray}

Our algorithm is composed of the equations (\ref{recursion-U_{n,k}^*})--(\ref{eqn-U_n^*}), Theorem~\ref{thm-E^*-01}, and Corollary~\ref{coro-MIP-form-solution}.
\begin{algorithm}[H]
\renewcommand{\thealgorithm}{}
\renewcommand{\labelenumi}{\arabic{enumi}.}
\renewcommand{\labelenumii}{(\alph{enumii})}
\renewcommand{\labelenumiii}{\roman{enumiii}.}
\caption{Computing the MIP form of $\vc{\pi}$}
{\bf Input}: $\vc{Q}$, $\varepsilon \in (0,1)$, and increasing sequence $\{n_{\ell};\ell\in \bbZ_+\}$ of positive integers.
\\ 
{\bf Output}: 
$\presub{(n)}\wh{\vc{\pi}}^* = (\presub{(n)}\wh{\vc{\pi}}_{0}^*,\presub{(n)}\wh{\vc{\pi}}_{1}^*,\dots,\presub{(n)}\wh{\vc{\pi}}_{n}^*)$,
where $n \in \bbZ_+$ is fixed when the iteration stops.
%
%
\begin{enumerate}
\setlength{\parskip}{0cm} 
\setlength{\itemsep}{0cm} 
\item Find $\vc{v}>\vc{0}$, $b>0$, and $\bbC \in \bbS$ such that Conditions~\ref{cond-01} and \ref{cond-02} hold.
\item Set $n = 0$ and $\ell=1$.
\item Compute $\vc{U}_0^*$ by (\ref{eqn-U_{0,0}^*}) and $\vc{u}_0^*$ by (\ref{eqn-u_0^*}).
\item Iterate (a)--(d) below:
%
%
\begin{enumerate}
\setlength{\parskip}{0cm} 
\setlength{\itemsep}{0cm} 
\item Increment $n$ by one.
\item Compute $\vc{U}_n^*=\vc{U}_{n,n}^*$ by (\ref{eqn-U_n^*}).
\item Compute $\vc{U}_{n,k}^*$, $k=0,1,\dots,n-1$, by
  (\ref{eqn-U_{n,k}^*}) and  $\vc{u}^*_n$ by
  (\ref{eqn-u_n^*}).
\item  If $n=n_{\ell}$, then perform the following:
\begin{enumerate}
\item Compute $\vc{y}_n$ by (\ref{defn-y_n}), and find $j_n^*$ satisfying (\ref{defn-j_n^*}).
\item Compute $\presub{(n)}\wh{\vc{\pi}}_{k}^*$, $k=0,1,\dots,n$, by (\ref{defn-(n)wh{pi}_k^*}).
\item If $\| \presub{(n_{\ell})}\wh{\vc{\pi}}^* - 
\presub{(n_{\ell-1})}\wh{\vc{\pi}}^*\| < \varepsilon$, then stop the iteration; otherwise increment $\ell$ by one and return to step~(a).
\end{enumerate}
\end{enumerate}
\end{enumerate}
\end{algorithm}

\begin{rem}
Equation (\ref{convergence-(n)wh{pi}^*}) leads to
\begin{eqnarray*}
\lim_{n\to\infty} 
\| \presub{(n)}\wh{\vc{\pi}}^* 
- \presub{(n+m)}\wh{\vc{\pi}}^*\| &=& 0
\quad \mbox{for any fixed $m \in \bbN$}.
\end{eqnarray*}
Therefore, our algorithm iterates Step~4 only a finite number of times.
\end{rem}

\begin{rem}\label{rem-Taki16-00}
Step (4.b) computes $\vc{U}_n^*$ by (\ref{eqn-U_n^*}).
The $(i,j)$-th element of $\vc{U}_n^*$ is the expected total sojourn time in state $(n,j)$ before the first visit to $\ol{\bbS}_n$ starting from state $(n,i)$. Thus, $\vc{T}_n^* = (-\vc{U}_n^*)^{-1}$, defined in (\ref{defn-T_n^*}),  is a non-conservative $Q$-matrix that governs the transient transitions of an absorbing Markov chain obtained by observing $\{(X(t),J(t))\}$ when it is in $\bbL_n$ during the first passage time to $\ol{\bbS}_n$ starting from $\bbL_n$. This consideration indicates $\vc{U}_n^* = (-\vc{T}_n^*)^{-1}$ can be  efficiently computed (see \cite[Proposition 1]{Le-Boud91}), provided that $\vc{T}_n^*$ is given.
\end{rem}

\begin{rem}
Generally, our algorithm computes the infinite sum $\sum_{\ell=n+1}^{\infty}\vc{Q}_{k,\ell}\vc{v}_{\ell}$ to obtain $\vc{y}_n$ in (\ref{defn-y_n}). However, 
this infinite sum can be calculated in many practical cases associated with queueing models (as implied by the examples in the next section).
Moreover, if $\vc{Q}$ is an LD-QBD generator, or equivalently, $\vc{Q}_{k,\ell} = \vc{O}$ for $k \in \bbZ_+$ and $|\ell - k| > 1$, then $\vc{y}_n$ is expressed without any infinite sum:
\[
\vc{y}_n = \vc{v}_n + \vc{U}_{n,n}^*\vc{Q}_{n,n+1}\vc{v}_{n+1},
\qquad n \in \bbZ_+.
\]
Furthermore, a noteworthy fact is that computing the infinite sum $\sum_{\ell=n+1}^{\infty}\vc{Q}_{k,\ell}\vc{v}_{\ell}$ is not always necessary even if $\vc{Q}$ is not an LD-QBD generator. To demonstrate this, suppose that we have an explicit expression for $\vc{w}_{k,n}$, $k,n\in\bbZ_+$, such that
\[
\lim_{n\to\infty} \sum_{k=0}^n \vc{\pi}_k\vc{w}_{k,n} = 0,
\quad
\sum_{\ell=n+1}^{\infty}\vc{Q}_{k,\ell}\vc{v}_{\ell} \le \vc{w}_{k,n},
\quad k,n\in\bbZ_+.
\]
It then follows from (\ref{ineqn-r_n(psi_n^*)}) and (\ref{eqn-180318-03}) that
\[
r_n(\wt{\vc{\alpha}}_n)
\le { 1 \over \sum_{\ell=0}^n \vc{\pi}_{\ell}\vc{e} }
\left(
\vc{\pi}_n\vc{\Delta}_n\vc{v}_n
+ \sum_{k=0}^n\vc{\pi}_k \vc{w}_{k,n}
\right)
\to 0\quad \mbox{as $n \to \infty$}.
\]
Thus, we modify Step (4.d.i) as follows: Compute 
\[
\bv{\vc{y}}_n := (\bv{y}_n(j))_{j\in\bbM_n} = \vc{v}_n + \sum_{k=0}^n \vc{U}_{n,k}^*\vc{w}_{k,n},
\]
and find
\[
j_n^*
\in \argmin_{j\in\bbM_n} {\bv{y}_n(j) \over u_n^*(j) }.
\]
Despite this modification, our update algorithm works well.
\end{rem}


\section{Applicability of the proposed algorithm}\label{sec-discussion}

This section demonstrates the applicability of our algorithm. To this end, we consider a BMAP/M/$\infty$ queue and M/M/$s$ retrial queue, respectively, in Sections~\ref{subsec-BMAP-M-Infty} and \ref{subsec-MMss-retrial}. For each model, we present a sufficient condition for Conditions~\ref{cond-01} and \ref{cond-02}, under which our update algorithm works well.

\subsection{BMAP/M/$\infty$ queue}\label{subsec-BMAP-M-Infty}

This subsection considers a BMAP/M/$\infty$ queue. 
The system has an infinite number of servers. Customers arrive at the system according to a batch Markovian arrival process (BMAP) (see, e.g., \cite{Luca91}). Arriving customers are immediately served, and their service times are independent and identically distributed (i.i.d.) with an exponential distribution having mean $\mu^{-1}$.

Let $\{N(t);t \in \bbR_+\}$ denote the counting process of arrivals from the BMAP; that is, $N(t)$ is equal to the total number of arrivals during the time interval $[0,t]$, where $N(0) = 0$. Let $\{J(t);t \in \bbR_+\}$ denote the background Markov chain of the BMAP, which is defined on state space $\bbM=\{1,2,\dots,M\} \subset \bbN$. We assume that the bivariate stochastic process $\{(N(t),J(t));t\in\bbR_+\}$ is a continuous-time Markov chain which follows the transition law given by
\begin{eqnarray*}
\lefteqn{
\PP(N(t+\Delta t) = k,J(t+\Delta t) = j \mid J(t)=i)
}
\quad &&
\nonumber
\\
&=&
\left\{
\begin{array}{l@{~~~}ll}
1 + D_{0,i,i}\Delta t + o(\Delta t), 	& k=0, 	& i = j \in \bbM,
\\
D_{0,i,j}\Delta t + o(\Delta t), 		& k=0,  & i,j \in \bbM,~i \neq j,
\\
D_{k,i,j}\Delta t + o(\Delta t), 		& k \in \bbN, & i,j \in \bbM,
\\
0, 										& \mbox{otherwise},
\end{array}
\right.
\end{eqnarray*}
where $a(t) = o(b(t))$ represents $\lim_{t\to0}a(t)/b(t) = 0$. Thus, the BMAP is characterized by $\{\vc{D}_n;n\in\bbZ_+\}$, where $\vc{D}_n=(D_{n,i,j})_{i,j\in\bbM}$ for $n \in \bbZ_+$. Moreover, $\vc{D}:=\sum_{n\in\bbZ_+}\vc{D}_n$
 is the generator of the background Markov chain $\{J(t);t\in\bbR_+\}$. As usual, we assume that $\vc{D}$ is irreducible and
\[
\vc{D}\vc{e} \ge \vc{0},\neq \vc{0}.
\]

Let $X(t)$, $t \in \bbR_+$, denote the number of customers in the system at time $t$. It then follows that $\{(X(t),J(t));t\in\bbR_+\}$ is a continuous-time Markov chain on state space $\bbS:=\bbZ_+ \times \bbM$ with generator $\vc{Q}$ given by
\begin{equation}
\vc{Q} = 
\bordermatrix{
        & \bbL_0 & \bbL_1    & \bbL_2    & \bbL_3   & \cdots
\cr
\bbL_0 		& 
\vc{D}_0  	& 
\vc{D}_1 	& 
\vc{D}_2 	& 
\vc{D}_3 	&  
\cdots
\cr
\bbL_1 		&
\mu\vc{I}  	& 
\vc{D}_0-\mu\vc{I}  & 
\vc{D}_1 	& 
\vc{D}_2 	&    
\cdots
\cr
\bbL_2 			& 
\vc{O}  		& 
2\mu\vc{I}		&
\vc{D}_0-2\mu\vc{I} & 
\vc{D}_1 		&  
\cdots
\cr
\bbL_3 & 
\vc{O}  		& 
\vc{O}  		& 
3\mu\vc{I}		&
\vc{D}_0-3\mu\vc{I} & 
\cdots
\cr
~\vdots  	& 
\vdots     		& 
\vdots     		&  
\vdots    		& 
\vdots    		& 
\ddots
},
\label{defn-Q-BMAP-G-infty}
\end{equation}
where $\bbL_k = \{k\} \times \bbM$ (i.e., $M_k = M$) for all $k\in\bbZ_+$ and
\begin{eqnarray}
\vc{Q}_{k,\ell}
=
\left\{
\begin{array}{l@{~~~}ll}
\vc{D}_{\ell-k}, 		& k \in \bbZ_+, & \ell = k+1,k+2,\dots,
\\
\vc{D}_0 - k\mu\vc{I},  & k \in \bbN,  	&  \ell=k,
\\
k\mu\vc{I},  			& k \in \bbN,   & \ell=k-1.
\end{array}
\right.
\label{eqn-Q-BMAP-M-infty}
\end{eqnarray}

We now suppose that, for some $C>0$,
\begin{equation}
\sum_{k=1}^{\infty}(k+\re)\log(k + \re) \vc{D}_k \vc{e} < C\vc{e},
\label{cond-BMAP-M-infty}
\end{equation}
and let
\begin{equation*}
\vc{v}_k = \log(k + \re) \vc{e},\qquad k \in \bbZ_+,
\end{equation*}
where ``$\re$" denotes Napier's constant. Clearly, $\sum_{k=1}^{\infty}\log(k + \re) \vc{D}_k \vc{e} < C\vc{e}$ due to (\ref{cond-BMAP-M-infty}). Thus, Condition \ref{cond-01} holds for generator $\vc{Q}$ in (\ref{defn-Q-BMAP-G-infty}) (see \cite[Lemma~1]{Yaji16}).

It remains to verify that Condition \ref{cond-02} holds. From (\ref{defn-Delta_n}) and (\ref{defn-Q-BMAP-G-infty}), we have $\Delta_k(i,i) = |q(k,i;k,i)| = k\mu + |D_{0,i,i}|$ and thus Condition \ref{cond-02} is reduced to
\begin{equation}
\sum_{(k,i)\in\bbZ_+\times\bbM} \pi(k,i) k \log(k + \re) < \infty.
\label{ineqn-pi*v-03}
\end{equation}
Therefore, we show that (\ref{ineqn-pi*v-03}) holds.

We begin with the following lemma.
\begin{lem}\label{lem-BMAP-M-infty-subexp}
Let $V$ denote a function on $\bbR_+$ such that
\begin{equation}
V(x) = (x + \re) \log(x + \re),\qquad x \in \bbR_+.
\label{defn-V(x)}
\end{equation}
If (\ref{cond-BMAP-M-infty}) holds, then there exist some $K \in \bbZ_+$ and $\theta > 0$ such that
\begin{eqnarray}
\sum_{\ell=0}^{\infty}\vc{Q}_{k,\ell} V(\ell)\vc{e}
&\le& -\theta V(k) \vc{e} \le -\vc{e}\quad \mbox{for all $k \ge K+1$}.
\label{ineqn-sum-Q*v-subexp-k>=K}
\end{eqnarray}

\end{lem}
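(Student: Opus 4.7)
The plan is to evaluate the drift $\sum_{\ell=0}^\infty \vc{Q}_{k,\ell} V(\ell)\vc{e}$ explicitly using (\ref{eqn-Q-BMAP-M-infty}) and then isolate the dominant ``service'' term against the ``arrival'' remainder. Since $\vc{D}\vc{e}=\vc{0}$, rewriting $\vc{D}_0\vc{e}=-\sum_{j=1}^\infty \vc{D}_j\vc{e}$ and simplifying yields, for $k\in\bbN$,
\[
\sum_{\ell=0}^\infty \vc{Q}_{k,\ell}\,V(\ell)\vc{e}
= -k\mu\bigl(V(k)-V(k-1)\bigr)\vc{e}
+ \sum_{j=1}^\infty \vc{D}_j\bigl(V(k+j)-V(k)\bigr)\vc{e}.
\]
The strategy is to show that the first term behaves like $-\mu V(k)\vc{e}(1+o(1))$ while the arrival term is only $O(\log(k+\re))\vc{e}=o(V(k))\vc{e}$, so their sum is dominated by $-\theta V(k)\vc{e}$ for any $\theta<\mu$ once $k$ is large.

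For the service term I would use $V'(x)=\log(x+\re)+1$ and the mean value theorem to get $V(k)-V(k-1)\ge V'(k-1)=\log(k-1+\re)+1$, from which $k(V(k)-V(k-1))/V(k)\to 1$ as $k\to\infty$. Hence, given $\delta>0$, there is $K_1$ with $k\mu(V(k)-V(k-1))\ge(1-\delta)\mu V(k)$ for $k\ge K_1$.

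For the arrival term I would exploit convexity of $V$ (since $V''(x)=1/(x+\re)>0$) to write $V(k+j)-V(k)\le jV'(k+j)=j(\log(k+j+\re)+1)$. The key tailored inequality is the submultiplicative estimate $\log(k+j+\re)\le\log(k+\re)+\log(j+\re)$, which holds because $(k+\re)(j+\re)\ge k+j+\re$ for all $k,j\ge 0$. This gives
\[
V(k+j)-V(k)\le j\log(k+\re)+j\log(j+\re)+j,
\]
and since $(j+\re)\log(j+\re)\ge j\log(j+\re)\ge j$ for $j\ge 1$, the hypothesis (\ref{cond-BMAP-M-infty}) implies both $\sum_{j\ge 1}j\vc{D}_j\vc{e}<C\vc{e}$ and $\sum_{j\ge 1}j\log(j+\re)\vc{D}_j\vc{e}<C\vc{e}$. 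Consequently,
\[
\sum_{j=1}^\infty \vc{D}_j\bigl(V(k+j)-V(k)\bigr)\vc{e}\le (\log(k+\re)+2)C\vc{e}.
\]
Because $\log(k+\re)/V(k)\to 0$, there exists $K_2$ making this bound at most $\delta\mu V(k)\vc{e}$ for $k\ge K_2$.

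Putting the two estimates together for $k\ge K:=\max(K_1,K_2)$ yields $\sum_{\ell=0}^\infty \vc{Q}_{k,\ell}V(\ell)\vc{e}\le -(1-2\delta)\mu V(k)\vc{e}$; choosing, say, $\delta=1/4$ gives $\theta=\mu/2$ and the first inequality in (\ref{ineqn-sum-Q*v-subexp-k>=K}). Enlarging $K$ further if necessary to guarantee $\theta V(k)\ge 1$ (possible since $V(k)\to\infty$) supplies the second inequality $-\theta V(k)\vc{e}\le -\vc{e}$. The main obstacle I anticipate is calibrating the bound on $V(k+j)-V(k)$ so that its $j$-dependence is dominated by the summable weight $(j+\re)\log(j+\re)$ appearing in (\ref{cond-BMAP-M-infty}); the submultiplicative split of $\log(k+j+\re)$ is the decisive step that reduces this to a single factor of $\log(k+\re)$, which then loses to $V(k)$.
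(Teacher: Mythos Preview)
Your proof is correct. The overall architecture matches the paper's: the same drift decomposition into a service term $-k\mu(V(k)-V(k-1))\vc{e}$ and an arrival remainder, and the same convexity argument showing $k(V(k)-V(k-1))/V(k)\to 1$. The difference lies in how the arrival contribution is controlled. The paper exploits the log-concavity of $V$ to obtain the submultiplicative bound $V(k+\ell)\le V(k)V(\ell)$, which gives the uniform dominating function $V(\ell)$ and lets dominated convergence send $\sum_{\ell\ge 0}\frac{V(k+\ell)}{V(k)}\vc{D}_\ell\vc{e}$ to $\vc{D}\vc{e}=\vc{0}$. You instead bound the increment $V(k+j)-V(k)\le jV'(k+j)$ by convexity and then split $\log(k+j+\re)\le\log(k+\re)+\log(j+\re)$, yielding an explicit $O(\log(k+\re))$ bound on the arrival sum. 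Your route is more quantitative (it produces a concrete rate $\log(k+\re)/V(k)$ rather than an abstract $o(1)$) and avoids the log-concavity observation; the paper's route is slightly slicker in that a single inequality $V(x+y)\le V(x)V(y)$ handles all $j$ at once. Either way the conclusion follows.
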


\begin{proof}
Because $\lim_{x\to\infty}V(x) = \infty$, it suffices to prove that
\begin{eqnarray}
\limsup_{k\to\infty}
{1 \over V(k)}\sum_{\ell=0}^{\infty}\vc{Q}_{k,\ell} V(\ell)\vc{e} 
&\le& -\mu \vc{e}.
\label{limsup-sum-Q*v/V(k)}
\end{eqnarray}
It follows from (\ref{eqn-Q-BMAP-M-infty}) that, for $k\in\bbN$,
\begin{eqnarray}
{1 \over V(k)}
\sum_{\ell=0}^{\infty}\vc{Q}_{k,\ell} V(\ell)\vc{e}
&=& 
\left\{
-\mu k \left( 1 - {V(k-1) \over V(k)} \right) \vc{e}
+ \sum_{\ell=0}^{\infty} { V(k+\ell) \over V(k) }\vc{D}_{\ell}
\vc{e}
\right\}.\quad~~~~
\label{eqn-Qv-subexp}
\end{eqnarray}
Furthermore, $V$ is differentiable and convex. Thus, we have
\[
V(k) \ge V(k-1) + V'(k-1),\qquad k \ge 1.
\]
Using this inequality and (\ref{defn-V(x)}), we obtain
\begin{eqnarray}
\liminf_{k\to\infty}
k\left( 1 - {V(k-1) \over V(k)} \right)
&\ge& \liminf_{k\to\infty} k {V'(k-1) \over V(k)}
\nonumber
\\
&=& \lim_{k\to\infty}
{k \over k + \re}{\log (k - 1 + \re) + 1 \over \log (k + \re)}
 = 1.
\label{liminf-k*V(k-1)/V(k)}
\end{eqnarray}
Applying (\ref{liminf-k*V(k-1)/V(k)}) to (\ref{eqn-Qv-subexp}) yields
\begin{eqnarray*}
\limsup_{k\to\infty}
{1 \over V(k)}
\sum_{\ell=0}^{\infty}\vc{Q}_{k,\ell} V(\ell)\vc{e}
\le -\mu\vc{e}
+ \limsup_{k\to\infty}
\sum_{\ell=0}^{\infty} { V(k+\ell) \over V(k) }\vc{D}_{\ell}
\vc{e},
\end{eqnarray*}
and therefore (\ref{limsup-sum-Q*v/V(k)}) holds if
\begin{equation}
\lim_{k\to\infty}
\sum_{\ell=0}^{\infty} { V(k+\ell) \over V(k) }\vc{D}_{\ell}\vc{e}
= \vc{0}.
\label{lim-sum-V(k+l)/V(k)*D(l)}
\end{equation}
Consequently, our goal is to prove (\ref{lim-sum-V(k+l)/V(k)*D(l)}).

We note that $V \ge 1$ is log-concave, which implies the following: For any $x,y \in \bbR_+$ such that $x+y>0$,
\begin{align*}
\log V(x) &\ge {y \over x+y} \log V(0) + {x \over x+y} \log V(x+y),
\\
\log V(y) &\ge {x \over x+y} \log V(0) + {y \over x+y} \log V(x+y).
\end{align*}
These inequalities yield
\[
\log V(x) + \log V(y)
\ge \log V(0) + \log V(x+y) \ge \log V(x+y),
\]
which leads to
\begin{equation}
V(x+y) \le V(x)V(y),\qquad x,y \in \bbR_+.
\label{indeqn-V(x+y)}
\end{equation}
Using (\ref{indeqn-V(x+y)}) and (\ref{defn-V(x)}), we obtain, for all $k \in \bbZ_+$,
\begin{eqnarray*}
\sum_{\ell=1}^{\infty} { V(k+\ell) \over V(k) }\vc{D}_{\ell}\vc{e}
\le
\sum_{\ell=1}^{\infty}  V(\ell)\vc{D}_{\ell}\vc{e}
= \sum_{\ell=1}^{\infty} (\ell + \re) \log (\ell + \re)\vc{D}_{\ell}\vc{e}
< C\vc{e},
\end{eqnarray*}
where the last inequality is due to (\ref{cond-BMAP-M-infty}). Thus, by the dominated convergence theorem and (\ref{defn-V(x)}), we obtain
\begin{equation*}
\lim_{k\to\infty}
\sum_{\ell=0}^{\infty} { V(k+\ell) \over V(k) }\vc{D}_{\ell}\vc{e}
=
\vc{D}_0\vc{e}
+
\sum_{\ell=1}^{\infty} 
\lim_{k\to\infty}{ V(k+\ell) \over V(k) }\vc{D}_{\ell}\vc{e}
= \sum_{\ell=0}^{\infty}\vc{D}_{\ell}\vc{e}
= \vc{0},
\end{equation*}
which shows that (\ref{lim-sum-V(k+l)/V(k)*D(l)}) holds. \qed
\end{proof}

\medskip

Let $\wt{\vc{v}}:=(\wt{v}(k,i))_{(k,i) \in \bbZ_+ \times \bbM}$ and $\wt{\vc{f}}:=(\wt{f}(k,i))_{(k,i) \in \bbZ_+ \times \bbM}$ denote column vectors such that
\begin{eqnarray}
\wt{v}(k,i) &=& V(k) = (k + \re)\log (k + \re), ~~\qquad k \in \bbZ_+, \quad~~ i \in \bbM,
\nonumber
\\
\wt{f}(k,i) &=& 
\left\{
\begin{array}{l@{~~~}ll}
1, 			 &  0 \le k \le K,     & i \in \bbM,
\\
\theta V(k) = \theta (k + \re)\log (k + \re), &  k \ge K+1, & i \in \bbM,
\end{array}
\right.
\label{defn-f-Qv-BMAP-M-infty-heavy}
\end{eqnarray}
where $K \in \bbZ_+$ and $\theta > 0$ satisfying (\ref{ineqn-sum-Q*v-subexp-k>=K}). It then follows from Lemma~\ref{lem-BMAP-M-infty-subexp} that, for some $\wt{b} > 0$,
\begin{equation*}
\vc{Q}\wt{\vc{v}} \le -\wt{\vc{f}} + \wt{b}\vc{1}_{\bbZ_K \times \bbM},
\end{equation*}
which yields $\vc{\pi}\wt{\vc{f}} < \wt{b}$. Combining this inequality and (\ref{defn-f-Qv-BMAP-M-infty-heavy}) results in (\ref{ineqn-pi*v-03}). 
We have confirmed that Condition \ref{cond-02} is satisfied. As a result,
our algorithm is always applicable to BMAP/M/$\infty$ queues satisfying (\ref{cond-BMAP-M-infty}).

\subsection{M/M/$s$ retrial queue}\label{subsec-MMss-retrial}

In this subsection, we consider an M/M/$s$ retrial queue (which is sometimes called an M/M/$s$/$s$ retrial queue).
The system has $s$ ($s \in \bbN$) servers but no {\it real} waiting room.
{\it Primary customers} (which originate from the exterior) arrive to the system according to a Poisson process with rate $\lambda
\in (0,\infty)$. If an arriving primary customer finds an idle server, then the customer occupies the server, otherwise it joins the {\it orbit} (i.e., the {\it virtual} waiting room). Customers in the orbit are referred to as {\it retrial customers}. Each retrial customer stays in the orbit for an exponentially distributed time with mean $\eta^{-1} \in (0,\infty)$, independently of all the other events.
After the sojourn in the orbit, a retrial customer tries to occupy one of idle servers.  If such a retrial customer finds no idle servers, then it goes back to the orbit; that is, becomes a retrial customer again. We assume that the service times of primary and retrial customers are i.i.d.\ with an
exponential distribution having mean $\mu^{-1} \in (0,\infty)$.

Let $X(t)$, $t\in\bbR_+$, denote the number of customers in the orbit
at time $t$. Let $J(t)$, $t\in\bbR_+$, denote the number of busy
servers at time $t$. The stochastic process $\{(X(t),J(t));t\in\bbR_+\}$ is a level-dependent
quasi-birth-and-death process (LD-QBD) on state space $\bbS:= \bbZ_+
\times \bbZ_s$ with generator $\vc{Q}$ given by
\begin{equation}
\vc{Q} =
\bordermatrix{
               & \bbL_0 &  \bbL_1  &  \bbL_2       &  \bbL_3       & \cdots
\cr
\bbL_0 &
\vc{Q}_{0,0} &
\vc{Q}_{0,1} &
\vc{O} &
\vc{O} &
\cdots
\cr
\bbL_1 &
\vc{Q}_{1,0} &
\vc{Q}_{1,1} &
\vc{Q}_{1,2} &
\vc{O} &
\cdots
\cr
\bbL_2 &
\vc{O}&
\vc{Q}_{2,1} &
\vc{Q}_{2,2} &
\vc{Q}_{2,3} &
\cdots
\cr
\bbL_3
& \vc{O}					
&
\vc{O}					
&
\vc{Q}_{3,2}&
\vc{Q}_{3,3} &
\ddots
\cr
~~\vdots
& \vdots					
&
\vdots					&
\vdots					&
\ddots					&
\ddots
},
\label{eqn-Q-LD-QBD}
\end{equation}
where $\bbL_k = \{k\} \times \bbZ_s$ for $k \in \bbZ_+$, and where
\begin{align}
&&&&
\vc{Q}_{k,k-1}   
&=  
\left (
\begin{array}{llllll}
0 \  & k \eta  \  & 0 & \cdots &  0 
\\
0 & 0 & k \eta \  &  \ddots &  \vdots 
\\
\vdots &  & \ddots & \ddots &   0 
\\
\vdots &   &  & 0 \  & k \eta  
\\
0 & \cdots & \cdots & 0 & 0 
\\
\end{array}
\right ), & k &\in \bbN,&&&&
\label{eqn-Q_{k,k-1}}
\\
&&&&
\vc{Q}_{k,k+1}  &=  
\left (
\begin{array}{cccccc}
0      & 0      & \cdots & 0      & 0 
\\
0      & 0      & \cdots & 0      & 0 
\\
\vdots & \vdots & \ddots & \vdots & \vdots 
\\
0      & 0      & \cdots & 0      & 0 
\\
0      & 0      & \cdots & 0      & \lambda  
\\
\end{array}
\right ), & k &\in \bbZ_+,&&&&
\label{eqn-Q_{k,k+1}}
\end{align}
and
\begin{eqnarray}
\vc{Q}_{k,k}  
&=& 
\left (
\begin{array}{cccccc}
-\psi_{k,0} \ & \lambda\ & 0 & \cdots   &  \cdots & 0 
\\
\mu   & -\psi_{k,1}  & \lambda  & \ddots   &   & \vdots 
\\
0  & 2 \mu \ & -\psi_{k,2} \ & \ddots &  \ddots & \vdots 
\\
\vdots & \ddots & \ddots & \ddots & \ddots & 0 
\\
\vdots & & \ddots & \ddots \quad & -\psi_{k,s-1} \quad & \lambda 
\\
0 & \cdots & \cdots & 0 &  s \mu \ & -\psi_{k,s}
\\
\end{array}
\right ),\quad k \in \bbZ_+,\qquad
\label{eqn-Q_{k,k}}
\end{eqnarray}
with
\begin{align*}
&&&&
\psi_{k,i}
&= \lambda + i \mu + k\eta, & k &\in \bbZ_+,\ i \in \bbZ_{s-1},
&&&&
\\
&&&&
\psi_{k,s}
&= \lambda + s \mu, & k &\in \bbZ_+.
&&&&
\end{align*}

We now assume that the stability condition $\rho := \lambda / (s\mu) < 1$ holds. It then follows that 
the LD-QBD $\{(X(t),J(t))\}$ is ergodic (see, e.g., \cite[Section~2.2]{Fali97}) and thus has the unique stationary distribution vector $\vc{\pi}=(\pi(k,i))_{(k,i)\in\bbS}$. Under this stability condition, we show that Conditions~\ref{cond-01} and \ref{cond-02} are satisfied, which requires the following proposition.
\begin{prop}[{}{\cite[Lemma~4.1]{Masu17-JORSJ}}]\label{lem-MMC}
Suppose that $\rho = \lambda /
(s\mu) < 1$. For $k \in \bbZ_+$, let $\vc{v}_k=(v(k,j))_{j\in\bbZ_s}$ be given by
\begin{eqnarray}
v(k,i)
&=& \left\{
\begin{array}{l@{~~~}ll}
\alpha^k/c, 		& k \in \bbZ_+,&  i \in \bbZ_{s-1},
\\
\alpha^k/(c\gamma),	& k \in \bbZ_+,& i = s,
\end{array}
\right.
\label{eqn-v(k)-retrial}
\end{eqnarray}
where $\alpha$, $\gamma$, and $c$ are positive constants such that
\begin{eqnarray}
1 &<& \alpha < \rho^{-1},
\label{defn-theta}
\\
\alpha^{-1}
&<& \gamma
< 1 - \rho(\alpha-1),
\nonumber
\\
c &=& s\mu \left[ 1 - \rho(\alpha-1) - \gamma \right].
\nonumber
\end{eqnarray}
Furthermore, let
\begin{eqnarray*}
b &=& \max_{k \in \bbZ_{K}}\alpha^k
\left[1 -
c^{-1}\{ k\eta ( 1 - \gamma^{-1}\alpha^{-1})  + \lambda ( 1 - \gamma^{-1} ) \}
\right] \vmax 0,
\nonumber
\\
K &=&
\left\lceil
{ c + \lambda ( \gamma^{-1} - 1) \over \eta ( 1 - \gamma^{-1}\alpha^{-1}) }
\right\rceil \vmax 1 - 1,
\nonumber
\end{eqnarray*}
where $x \vmax y = \max(x,y)$. Under these conditions, the generator
$\vc{Q}$ of the LD-QBD, characterized by (\ref{eqn-Q-LD-QBD})--(\ref{eqn-Q_{k,k}}), satisfies
\begin{equation*}
\vc{Q}\vc{v}
\le - c\vc{v} + b \vc{1}_{\bbS_{K}}.
\end{equation*}
\end{prop}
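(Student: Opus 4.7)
The plan is to verify the drift inequality componentwise, exploiting the LD-QBD block structure. Since $\vc{Q}$ has only $\vc{Q}_{k,k-1}$, $\vc{Q}_{k,k}$, $\vc{Q}_{k,k+1}$ nonzero, and $v(k,i)$ depends on the phase only through the dichotomy $i < s$ vs.\ $i = s$, I would compute $(\vc{Q}\vc{v})(k,i)$ by splitting into the three cases $0 \le i \le s-2$, $i = s-1$, and $i = s$. In each case most intra-level contributions cancel because $v(k,\cdot)$ is constant on $\bbZ_{s-1}$, and only transitions that either change $k$ or cross the phase boundary $s-1 \leftrightarrow s$ contribute.

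First, for $0 \le i \le s-2$ and $k \ge 1$, the nontrivial contribution comes solely from the retrial transition $(k,i)\to(k-1,i+1)$ at rate $k\eta$, landing on $v(k-1,i+1) = \alpha^{k-1}/c$. I expect
\[
(\vc{Q}\vc{v})(k,i) = -\frac{k\eta(1-\alpha^{-1})\alpha^k}{c},
\]
which for $k$ large enough dominates $-c\cdot v(k,i) = -\alpha^k$. The boundary $k=0$ yields $(\vc{Q}\vc{v})(0,i)=0$, which is absorbed by $b$.

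Second, for $i=s-1$, the primary arrival at rate $\lambda$ switches into the $v$-value $\alpha^k/(c\gamma)$, and the retrial (for $k\ge 1$) lands on $v(k-1,s)=\alpha^{k-1}/(c\gamma)$. Collecting terms gives
\[
(\vc{Q}\vc{v})(k,s-1) = \frac{\alpha^k}{c}\bigl\{\lambda(\gamma^{-1}-1) - k\eta(1-\gamma^{-1}\alpha^{-1})\bigr\}.
\]
The hypothesis $\gamma > \alpha^{-1}$ ensures $1 - \gamma^{-1}\alpha^{-1} > 0$, and the requirement $(\vc{Q}\vc{v})(k,s-1) \le -\alpha^k = -c\cdot v(k,s-1)$ is equivalent to $k\ge\{c+\lambda(\gamma^{-1}-1)\}/\{\eta(1-\gamma^{-1}\alpha^{-1})\}$, which is exactly the threshold defining $K$. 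For $k\le K$ the excess $(\vc{Q}\vc{v})(k,s-1) + c\cdot v(k,s-1)$ equals
\[
\alpha^k\bigl[1 - c^{-1}\{k\eta(1-\gamma^{-1}\alpha^{-1}) + \lambda(1-\gamma^{-1})\}\bigr],
\]
which is precisely the expression maximized in the formula for $b$, so that expression is bounded by $b$ by construction.

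Third, for $i=s$ only the rates $\lambda$ (to $(k+1,s)$) and $s\mu$ (to $(k,s-1)$) matter. Direct computation, using $\rho = \lambda/(s\mu)$, yields
\[
(\vc{Q}\vc{v})(k,s) = -\frac{\alpha^k s\mu}{c\gamma}\bigl\{1-\gamma-\rho(\alpha-1)\bigr\},
\]
and the very definition $c = s\mu\{1-\rho(\alpha-1)-\gamma\}$ makes this equal to $-\alpha^k/\gamma = -c\cdot v(k,s)$. The assumptions $1<\alpha<\rho^{-1}$ and $\alpha^{-1}<\gamma<1-\rho(\alpha-1)$ (nonempty thanks to $\rho<1$ and $\alpha<\rho^{-1}$) guarantee the positivity of $c$ and of $1-\gamma-\rho(\alpha-1)$, so the bound holds with equality and no contribution to $b$ is needed from this case.

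Assembling the three cases gives $\vc{Q}\vc{v}\le -c\vc{v} + b\vc{1}_{\bbS_K}$. The whole argument is finite arithmetic; the only real obstacle is bookkeeping, namely checking that the thresholds obtained from the $i\in\bbZ_{s-2}$ case are dominated by the $i=s-1$ threshold (they are, since $\lambda(\gamma^{-1}-1)\ge 0$ and $1-\gamma^{-1}\alpha^{-1}\le 1-\alpha^{-1}$) and that all three boundary regimes $k=0$, $i=0$, $i=s-1$ for $k\le K$ are absorbed by the single constant $b$, which happens because the definition of $b$ takes the maximum over exactly these indices.
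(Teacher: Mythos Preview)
The paper does not actually prove this proposition: it is quoted verbatim from \cite[Lemma~4.1]{Masu17-JORSJ} and used as a black box, so there is no in-paper proof to compare against. Your direct componentwise verification is correct and is essentially the natural argument; the computations for the three phase regimes $i\le s-2$, $i=s-1$, $i=s$ are right, the identification of the threshold $K$ from the $i=s-1$ row is exactly what drives the stated formula, and your final check that the $i\le s-2$ excesses are dominated by the $i=s-1$ excess (via $\gamma<1\Rightarrow 1-\alpha^{-1}>1-\gamma^{-1}\alpha^{-1}$ and $\lambda(1-\gamma^{-1})<0$) closes the bookkeeping.
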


We note that $c\vc{v} \ge \vc{e}.$
Proposition~\ref{lem-MMC} thus shows that Condition~\ref{cond-01} is satisfied.
Moreover, Theorem~1 in \cite{KimJeri12} states that, for a certain constant $c_0 > 0$,
\begin{equation}
\pi(k,i) \simhm{k} {c_0 \over i!} \left(\eta \over \mu \right)^i
k^{-s+i+\lambda/(s\eta)} \rho^k,\qquad i \in \bbZ_s,
\label{asymp-pi(k,i)}
\end{equation}
where $a_1(x) \simhm{x} a_2(x)$ represents $\lim_{x\to\infty}a_1(x)/a_2(x) = 1$. Combining (\ref{eqn-v(k)-retrial})--(\ref{asymp-pi(k,i)}) yields
\[
\sum_{(k,i) \in \bbS} \pi(k,i) k v(k,i) < \infty,
\]
which implies that Condition~\ref{cond-02} is satisfied.
Consequently, our algorithm is always applicable to stable M/M/$s$ retrial queues.

\section{Concluding Remarks}\label{sec-remarks}

This paper has presented a sequential update algorithm for computing the stationary distribution vector in continuous-time upper block-Hessenberg Markov chains. The algorithm stops after finitely many iterations if Conditions~\ref{cond-01} and \ref{cond-02} are satisfied. These conditions hold in any stable M/M/$s$ retrial queue and the BMAP/M/$\infty$ queues satisfying the mild condition (\ref{cond-BMAP-M-infty}). Furthermore, the algorithm would be applicable (under some mild conditions) to MAP/PH/$s$ retrial queues, BMAP/PH/$\infty$ queues, and their variants.

\appendix

\section{Proof of Proposition~\ref{lem-pi_n-Lambda_n}}\label{appen-ineqn-pi-(U_n^*)^{-1}}

Let $\vc{T}_n^*$, $n \in \bbZ_+$, denote
\begin{equation}
\vc{T}_n^*
= 
\left\{
\begin{array}{l@{~~~}l}
\vc{Q}_{0,0}, & n=0,
\\

\vc{Q}_{n,n} 
+ \dm\sum_{\ell=0}^{n-1} \vc{U}_{n,\ell} \vc{Q}_{\ell,n},
& n \in \bbN.
\end{array}
\right.
\label{defn-T_n^*}
\end{equation}
It then follows from (\ref{defn-U_k^*}), (\ref{defn-Delta_n}), and (\ref{defn-T_n^*}) that
 \begin{equation}
\vc{\pi}_n\vc{\Delta}_n \ge
\vc{\pi}_n(-\vc{Q}_{n,n}) \ge \vc{\pi}_n(-\vc{T}_n^*) 
= \vc{\pi}_n(\vc{U}_n^*)^{-1},
\qquad n \in \bbZ_+.
\label{eqn-180909-01}
\end{equation}
It also follows from (\ref{eqn-pi_l-pi_k*U_{k,l}}), (\ref{defn-T_n^*}), and $\sum_{\ell=0}^{\infty} \vc{\pi}_{\ell} \vc{Q}_{\ell,n} = \vc{0}$ ($n\in\bbZ_+$) that
\begin{eqnarray}
\vc{\pi}_n(-\vc{T}_n^*)
&=& -\vc{\pi}_n \vc{Q}_{n,n} 
-\vc{\pi}_n \dm\sum_{\ell=0}^{n-1} \vc{U}_{n,\ell} \vc{Q}_{\ell,n}
\nonumber
\\
&=& - \sum_{\ell=0}^n \vc{\pi}_{\ell} \vc{Q}_{\ell,n}
= \sum_{\ell=n+1}^{\infty} \vc{\pi}_{\ell} \vc{Q}_{\ell,n}
 \ge \vc{0},\neq \vc{0},\qquad n \in \bbZ_+.
\label{eqn-180909-02}
\end{eqnarray}
Combining (\ref{eqn-180909-01}) and (\ref{eqn-180909-02}) yields (\ref{ineqn-pi-(U_n^*)^{-1}}).
The proof has been completed.



\section*{Acknowledgments}
The author thanks Mr. Masatoshi Kimura and Dr. Tetsuya Takine for providing the counterexample presented in Section~\ref{subsec-example}. 
The author also thanks an anonymous referee for his/her valuable comments that helped to improve the paper.

%
%
%
\bibliographystyle{plain} 

\begin{thebibliography}{10}
\providecommand{\url}[1]{{#1}}
\providecommand{\urlprefix}{URL }
\expandafter\ifx\csname urlstyle\endcsname\relax
  \providecommand{\doi}[1]{DOI~\discretionary{}{}{}#1}\else
  \providecommand{\doi}{DOI~\discretionary{}{}{}\begingroup
  \urlstyle{rm}\Url}\fi

\bibitem{Ande91}
Anderson, W.J.: Continuous-Time Markov Chains: An Applications-Oriented
  Approach.
\newblock Springer, New York (1991)

\bibitem{Baum12-Procedia}
Baumann, H., Sandmann, W.: Numerical solution of level dependent
  quasi-birth-and-death processes.
\newblock Procedia Computer Science \textbf{1}(1), 1561--1569 (2012)

\bibitem{Brem99}
Br\'{e}maud, P.: Markov Chains: Gibbs Fields, Monte Carlo Simulation, and
  Queues.
\newblock Springer, New York (1999)

\bibitem{Brig95}
Bright, L., Taylor, P.G.: Calculating the equilibrium distribution in level
  dependent quasi-birth-and-death processes.
\newblock Stochastic Models \textbf{11}(3), 497--525 (1995)

\bibitem{Fali97}
Falin, G.I., Templeton, J.G.C.: Retrial Queues.
\newblock Chapman \& Hall, London (1997)

\bibitem{Gibs87-JAP}
Gibson, D., Seneta, E.: Augmented truncations of infinite stochastic matrices.
\newblock Journal of Applied Probability \textbf{24}(3), 600--608 (1987)

\bibitem{Horn13}
Horn, R.A., Johnson, C.R.: Matrix Analysis, 2nd edn.
\newblock Cambridge University Press, Cambridge (2013)

\bibitem{KimJeri12}
Kim, J., Kim, J., Kim, B.: Tail asymptotics of the queue size distribution in
  the {M/M/$m$} retrial queue.
\newblock Journal of Computational and Applied Mathematics \textbf{236}(14),
  3445--3460 (2012)

\bibitem{Kimura-Takine16}
Kimura, M., Takine, T.: Private communication (2016)

\bibitem{M.Kimu18}
Kimura, M., Takine, T.: Computing the conditional stationary distribution in
  {Markov} chains of level-dependent {M/G/1}-type.
\newblock Stochastic Models \textbf{34}(2), 207--238 (2018)

\bibitem{Klim06}
Klimenok, V., Dudin, A.: Multi-dimensional asymptotically quasi-{Toeplitz}
  {Markov} chains and their application in queueing theory.
\newblock Queueing Systems \textbf{54}(4), 245--259 (2006)

\bibitem{Kont16}
Kontoyiannis, I., Meyn, S.P.: On the $f$-norm ergodicity of {Markov} processes
  in continuous time.
\newblock Electronic Communications in Probability \textbf{21}, Paper no. 77,
  1--10 (2016)

\bibitem{Lato99}
Latouche, G., Ramaswami, V.: Introduction to Matrix Analytic Methods in
  Stochastic Modeling.
\newblock SIAM, Philadelphia, PA (1999)

\bibitem{Le-Boud91}
{Le Boudec}, J.Y.: An efficient solution method for {Markov} models of {ATM}
  links with loss priorities.
\newblock IEEE Journal on Selected Areas in Communications \textbf{9}(3),
  408--417 (1991)

\bibitem{LiHai00}
Li, H., Zhao, Y.Q.: Stochastic block-monotonicity in the approximation of the
  stationary distribution of infinite {Markov} chains.
\newblock Stochastic Models \textbf{16}(2), 313--333 (2000)

\bibitem{LiQuan05}
Li, Q.L., Lian, Z., Liu, L.: An {$RG$}-factorization approach for a {BMAP/M/1}
  generalized processor-sharing queue.
\newblock Stochastic Models \textbf{21}(2--3), 507--530 (2005)

\bibitem{LLM2018}
Liu, Y., Li, W., Masuyama, H.: Error bounds for augmented truncation
  approximations of continuous-time {Markov} chains.
\newblock Operations Research Letters \textbf{46}(4), 409--413 (2018)

\bibitem{Luca91}
Lucantoni, D.M.: New results on the single server queue with a batch
  {Markovian} arrival process.
\newblock Stochastic Models \textbf{7}(1), 1--46 (1991)

\bibitem{Masu16-arXiv:1603}
Masuyama, H.: Limit formulas for the normalized fundamental matrix of the
  northwest-corner truncation of {Markov} chains:
  {Matrix-infinite-product-form} solutions of {block-Hessenberg Markov} chains
  (2016).
\newblock Preprint arXiv:1603.07787

\bibitem{Masu17-JORSJ}
Masuyama, H.: Error bounds for last-column-block-augmented truncations of
  block-structured {Markov} chains.
\newblock Journal of the Operations Research Society of Japan \textbf{60}(3),
  271--320 (2017)

\bibitem{Phun10-QTNA}
Phung-Duc, T., Masuyama, H., Kasahara, S., Takahashi, Y.: A simple algorithm
  for the rate matrices of level-dependent {QBD} processes.
\newblock In: Proceedings of the 5th International Conference on Queueing
  Theory and Network Applications (QTNA2010), pp. 46--52. ACM, New York (2010)

\bibitem{Shin09}
Shin, Y.W.: Fundamental matrix of transient {QBD} generator with finite states
  and level dependent transitions.
\newblock Asia-Pacific Journal of Operational Research \textbf{26}(5), 697--714
  (2009)

\bibitem{Shin98}
Shin, Y.W., Pearce, C.E.M.: An algorithmic approach to the {Markov} chain with
  transition probability matrix of upper block-{Hessenberg} form.
\newblock Korean Journal of Computational \& Applied Mathematics \textbf{5}(2),
  361--384 (1998)

\bibitem{Taki16}
Takine, T.: Analysis and computation of the stationary distribution in a
  special class of {Markov} chains of level-dependent {M/G/1}-type and its
  application to {BMAP/M/$\infty$} and {BMAP/M/$c+M$} queues.
\newblock Queueing Systems \textbf{84}(1--2), 49--77 (2016)

\bibitem{Yaji16}
Yajima, M., Phung-Duc, T., Masuyama, H.: The stability condition of
  {BMAP/M/$\infty$} queues.
\newblock In: Proceedings of the 11th International Conference on Queueing
  Theory and Network Applications (QTNA2016), Article no. 5, pp. 1--6. ACM, New
  York (2016)
\end{thebibliography}

\end{document}